\def\red#1{\textcolor{red}{#1}}
\newtheorem{theorem}{Theorem}[section]
\newtheorem{lemma}[theorem]{Lemma}
\newtheorem{corollary}[theorem]{Corollary}
\theoremstyle{definition}
\newtheorem{remark}[theorem]{Remark}
\def\Ind#1{{\mathbbmss 1}_{_{\scriptstyle #1}}}
\def\<{\langle}
\def\>{\rangle}
\def\wt#1{\widetilde{#1}}
\renewcommand{\baselinestretch}{1}\normalsize
\title{ On (signed) Takagi--Landsberg functions:\\ $p^{\text{th}}$ variation, maximum, and modulus of continuity}
\author{\normalsize Yuliya Mishura\thanks{This author acknowledges  that the present research is partially supported by the ToppForsk project nr.~274410 of the Research Council of Norway with title STORM: Stochastics for Time-Space Risk Models.}\\
\normalsize Department of Probability, Statistics\\
\normalsize  and Actuarial Mathematics\\ \normalsize Taras Shevchenko National University of Kyiv\\
\normalsize  01601 Kyiv, Ukraine\\
\normalsize    {\tt myus@univ.kiev.ua}\and  \setcounter{footnote}{6}\normalsize Alexander Schied\thanks{
Support from the
 Natural Sciences and Engineering Research Council of Canada through grant RGPIN-2017-04054 is gratefully acknowledged.}\\  \normalsize Dept.~of Statistics and Actuarial Science\\
 \normalsize University of Waterloo\\  \normalsize Waterloo, Ontario, N2L 3G1, Canada
\\
 \normalsize {\tt aschied@uwaterloo.ca} }
\begin{document}
\date{\small  First version: June 14, 2018\\
\small  This version: June 19, 2019} 
\maketitle
\begin{abstract}
We study a class $\mathfrak X^H$ of signed Takagi--Landsberg functions with Hurst parameter $H\in(0,1)$. We first show that the functions in $\mathfrak X^H$ admit a linear $p^{\text{th}}$ variation along the  sequence of dyadic partitions of $[0,1]$, where $p=1/H$. The slope of the linear increase can be represented as the $p^{\text{th}}$ absolute moment of the infinite Bernoulli convolution with parameter $2^{H-1}$. The existence of a continuous  $p^{\text{th}}$ variation enables the use of the functions in $\mathfrak X^H$ as test integrators for higher-order pathwise It\^o calculus. Our next results concern   the  maximum, the maximizers, and the  modulus of continuity of  the classical Takagi--Landsberg function  for all $0<H<1$. Then we identify the uniform maximum, the uniform maximal oscillation, and a uniform modulus of continuity for the class $\mathfrak X^H$. 
\end{abstract}

\noindent\textbf{MSC 2010 subject classification:} 28A80, 26A30, 60G17, 26A15 

\noindent\textbf{Key words:} Takagi--Landsberg function, blancmange curve, Hurst parameter, $p^{\text{th}}$ variation, infinite Bernoulli convolution, higher-order F\"ollmer integral, uniform maximal oscillation, uniform modulus of continuity

\section{Introduction}

The purpose of this paper is twofold. Our first goal is to study geometric properties of (signed) Takagi--Landsberg functions with Hurst parameter $H\in(0,1)$. These are fractal functions that have been widely studied in the literature; see, e.g., the surveys~\cite{AllaartKawamura,GalkinGalkina,Lagarias}. The properties in which we are interested   include size and location of the (uniform) maximum, the uniform maximal oscillation, and the (uniform) modulus of continuity. 
For instance, we derive an explicit formula for   the maximum of the Takagi--Landsberg function $x^H$ and  the location of its maximizers. We prove in particular that   $t =\frac13$ and $t=\frac23$ are the unique points at which the function $x^H(t)$ attains its maximum. In particular, the maximizers are independent of $H$ as long as $H<1$. This pattern changes at $H=1$, where it was shown by Kahane~\cite{Kahane} that  the classical Takagi function attains its maximum at an uncountable Cantor-type set of Hausdorff dimension $\frac12$. 
We then use our result on the maximum of $x^H$ to derive an exact modulus of continuity of the Takagi--Landsberg function. Here again, we observe that our result breaks down at $H=1$ as can be seen from the work of K\^ono~\cite{Kono}.

Our second goal is to establish the class of signed Takagi--Landsberg functions as a natural class of \lq\lq rough test integrators" for a higher-order pathwise integration theory in the spirit of F\"ollmer's pathwise It\^o calculus~\cite{FoellmerIto}. Such an integration theory was recently developed by Cont and Perkowski~\cite{ContPerkowski}; see also Gradinaru et al.~\cite{GradinaruEtAl} and Errami and Russo~\cite{ErramiRusso} for related earlier work.  To this end, we will argue that, for a given Hurst parameter $H\in(0,1)$ and $p:=1/H$, the corresponding signed Takagi--Landsberg functions admit a linear $p^{\text{th}}$ variation along the dyadic partitions of $[0,1]$ as defined in~\cite{ContPerkowski}. The slope of the linear increase of the $p^{\text{th}}$ variation is equal to the expectation $\mathbb E[|Z_H|^p]$, where  $Z_H$  is a random variable whose law is the infinite Bernoulli convolution with parameter $2^{H-1}$.

A high-order pathwise integration theory as mentioned above makes the techniques of model-free finance available for tackling  continuous-time phenomena that are rougher than the usual diffusion paths. One particularly exciting new development is provided by the observation by Gatheral et al.~\cite{GatheralRosenbaum} that \lq\lq volatility is rough". That is, empirical volatility time series exhibit a Hurst parameter much smaller than $1/2$. So far, modeling of rough volatility has been based on fractional Brownian motion (see, e.g.,~\cite{EuchEtAl} and the references therein). With higher-order pathwise It\^o calculus at hand, model-free techniques in the spirit of Bick and Willinger~\cite{BickWillinger}, F\"ollmer~\cite{FoellmerECM}, and the substantial follow-up literature  become feasible. In analogy with our previous papers~\cite{MishuraSchied,SchiedTakagi}, our contribution is to provide an explicit class of test integrators for such model-free approaches.

This paper is organized as follows. 
In Section~\ref{pth variation section}, we state our result on the $p^{\text{th}}$ variation of functions in  our class $\mathfrak X^H$ of signed Takagi--Landsberg functions. 
In Section~\ref{TL function section}, we present several  properties of the Takagi--Landsberg function $x^H$ with Hurst parameter $H\in(0,1)$ and related uniform properties of the class $\mathfrak X^H$. In particular, we derive the (uniform) maximum and its location, the maximal uniform oscillation, and  (uniform) moduli of continuity for $x^H$ and the functions in $\mathfrak X^H$. All proofs are contained in Section~\ref{Proofs}.

\section{Main results}\label{results section}

Recall that the Faber--Schauder functions are defined as 
$$e_\emptyset(t):=t,\qquad e_{0,0}(t):=(\min\{t,1-t\})^+,\qquad e_{n,k}(t):=2^{-n/2}e_{0,0}(2^n t-k)
$$
for $t\in\mathbb{R}$, $n=1,2,\dots$, and $k\in\mathbb{Z}$. It is well known that the restrictions of  the Faber--Schauder functions to $[0,1]$ form a Schauder basis for $C[0,1]$. Conversely, the Faber--Schauder functions can be used to construct some interesting functions. A prominent example is the \emph{Takagi--Landsberg function} with Hurst parameter $H>0$,
\begin{equation}\label{TL fctn eq}
x^H(t):=\sum_{m=0}^{\infty}2^{m(1/2-H)}\sum_{k=0}^{2^{m}-1}  e_{m,k}(t),\qquad 0\le t\le 1.
\end{equation}
 See Figure~\ref{TL function various H} for a plot of $x^H$ for various choices of $H$. 
\begin{figure}
\begin{center}
\begin{overpic}[width=10cm]{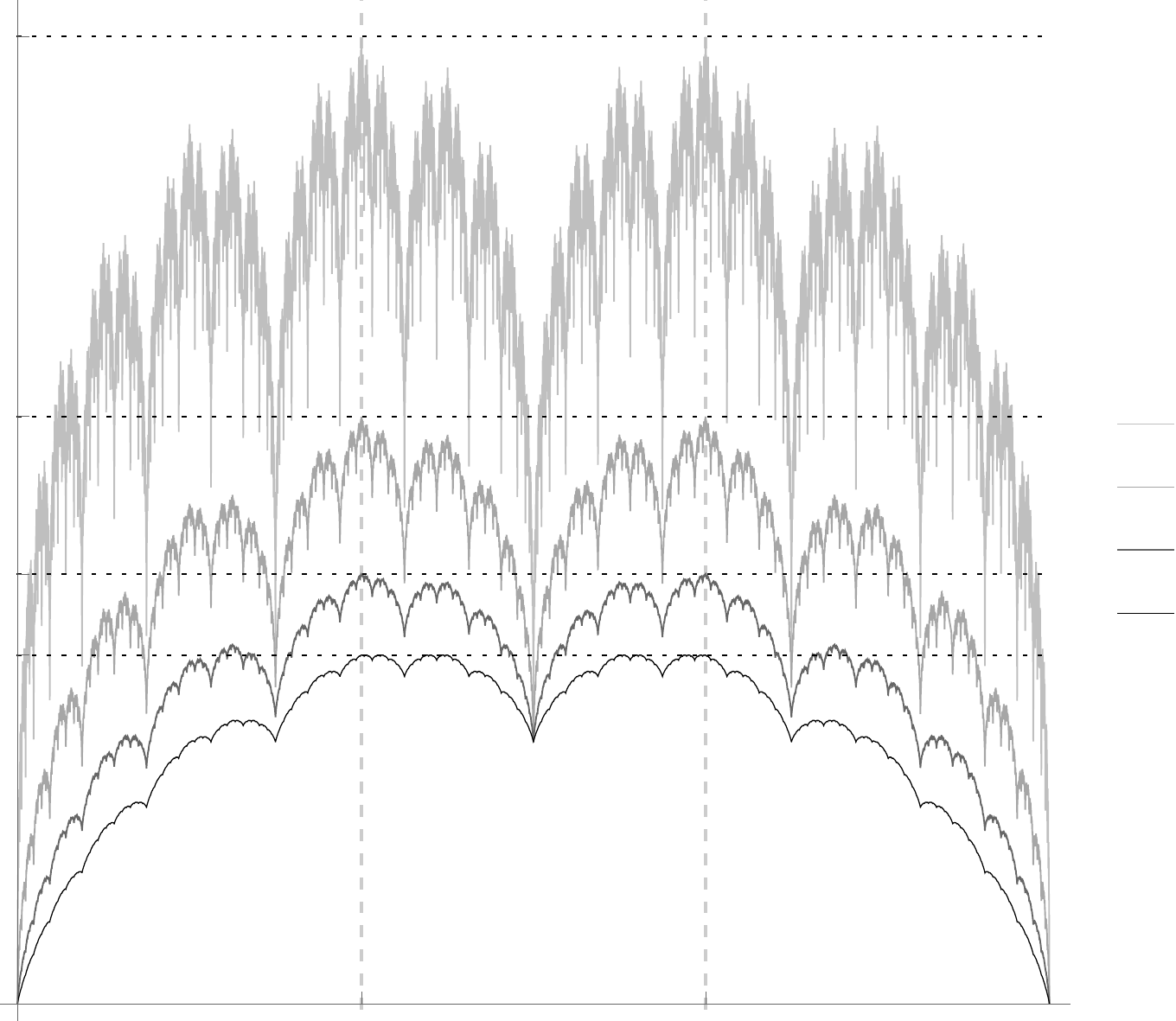}
{\small\put(29.7,-3){$\frac13$}
\put(59,-3){$\frac23$}
\put(88.5,-3){$\tiny1$}
\put(95,0){$t$}
\put(-2,30){$\frac23$}
\put(-11,37){$0.8222$}
\put(-8,50.5){$\frac{2+\sqrt2}3$}
\put(-9.5,82.5){2.095}
\put(101,33.5){$H=1$}
\put(101,39){$H=0.75$}
\put(101,44.5){$H=0.5$}
\put(101,50){$H=0.25$}}
\end{overpic}
\end{center}
\caption{Takagi--Landsberg functions $x^H$ and their maxima for various choices of the Hurst parameter $H$.}\label{TL function various H}
\end{figure}
The case $H=1$ corresponds to the celebrated Takagi function, which was introduced in~\cite{Takagi}, became rediscovered many times, and is sometimes also called the blancmange curve;  see the surveys~\cite{AllaartKawamura,GalkinGalkina,Lagarias}. The extension of this function to a general Hurst parameter was attributed to Landsberg~\cite{Landsberg} by Mandelbrot~\cite[p.~246]{BarnsleyEtal}.
In this paper, we focus on the case $0<H<1$.

We will also study  the function class  that arises if we allow for additional coefficients $\theta_{m,k}\in\{-1,+1\}$ in front of the functions $e_{m,k}$ in \eqref{TL fctn eq}.  We are thus interested in the class
\begin{equation}\label{XH}
\mathfrak{X}^{H}=\left\{x\in C[0,1]\,\bigg|\,x=\sum\limits_{m=0}^{\infty}2^{m\left(\frac{1}{2}-H\right)}\sum\limits_{k=0}^{2^m-1}\theta_{m,k}e_{m,k}\;\;\;  \text{for coefficients}  \;\;\; \theta_{m,k}\in\left\{-1,+1\right\} \right\}
\end{equation}
of \emph{signed Takagi--Landsberg functions} with Hurst parameter $H\in(0,1)$; see Figure~\ref{XH illustration fig} for an illustration. It can be checked easily that the Faber--Schauder series in the definition of the class $\mathfrak{X}^{H}$ converges uniformly for every $H$ and all possible choices $\theta_{m,k}\in\left\{-1,+1\right\}$. 
The class $\mathfrak{X}^{H}$ is a subset of the \emph{flexible Takagi class} introduced by Allaart~\cite{Allaartflexible}, and the special case $ \mathfrak X^{1/2}$ was analyzed in~\cite{SchiedTakagi}. 
\begin{figure}
\begin{center}
\begin{overpic}[width=14cm]{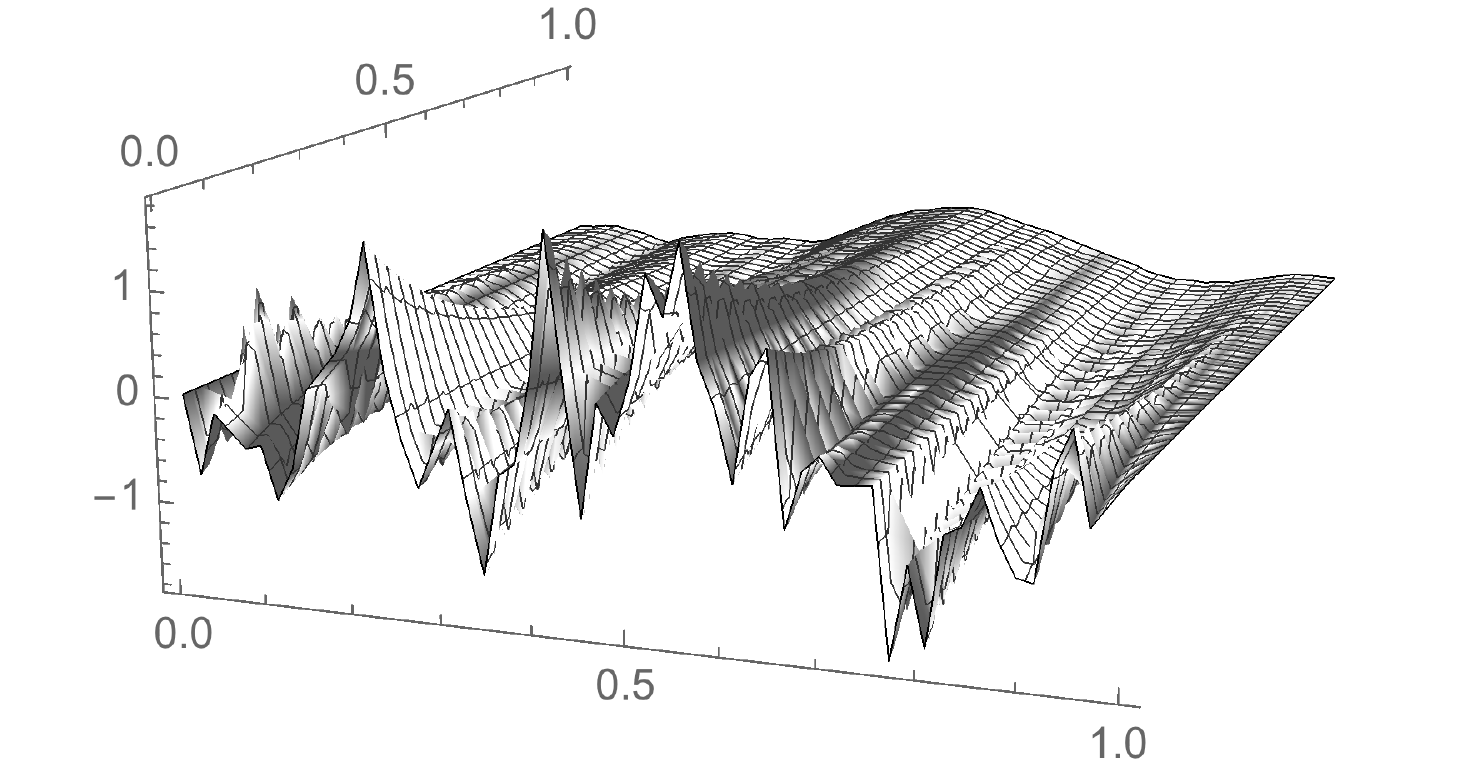}
\put(43,50){$H$}
\put(81,4){$t$}
\end{overpic}
\caption{The function
$(t,H)\longmapsto \sum\limits_{m=0}^{10}2^{m\left(\frac{1}{2}-H\right)}\sum\limits_{k=0}^{2^m-1}\theta_{m,k}e_{m,k}(t)
$
for randomly sampled coefficients $\theta_{m,k}\in\{-1,+1\}$. }
\end{center}\label{XH illustration fig}
\end{figure}

\subsection{The $\mathbf{p^{\text{th}}}$ variation of functions in $\mathbf{\mathfrak X^H}$.}\label{pth variation section}

The signed Takagi--Landsberg functions in the class $\mathfrak{X}^{H}$ share many properties with the sample paths of the corresponding  fractional Brownian bridge with the same Hurst parameter $H\in(0,1)$. For instance, it follows from~\cite[Theorem 3.1 (iii)]{Allaartflexible} that the functions in $\mathfrak{X}^{H}$  are nowhere differentiable. Moreover,~\cite{Baranski} showed that the graph of  $x^H$ has the same Hausdorff dimension, $2-H$, as the  trajectories of fractional Brownian motion~\cite{Orey1970}. In addition, for $H=\frac12$, the functions in $\mathfrak{X}^{H}$ have  linear quadratic variation $\langle x\rangle_t=t$, just as the sample paths of standard Brownian motion or of the Brownian bridge; see~\cite{SchiedTakagi}. Our Theorem~\ref{p-variation thm} will extend the preceding result to all Hurst parameters $H\in(0,1)$. It hence enables  us to use the functions in $\mathfrak X^H$ as test integrators for higher-order  pathwise integration theory in the spirit of F\"ollmer's pathwise It\^o calculus~\cite{FoellmerIto} as it was recently developed by Cont and Perkowski~\cite{ContPerkowski} (see also Gradinaru et al.~\cite{GradinaruEtAl} and Errami and Russo~\cite{ErramiRusso} for related earlier work). 

This integration theory is based on the notion of \emph{continuous $p^{\text{th}}$ variation along a refining sequence of partitions}, which we are going to recall next.
Let \begin{equation}\label{dyadic partition}
\mathbb{T}_n:=\{k2^{-n}\,|\,n\in\mathbb{N}, k=0,\dots, 2^n\},\qquad n=0,1,\dots,
\end{equation}
denote the $n^{\text{th}}$ dyadic partition of $[0,1]$.  It will be convenient to denote by $s'$  the successor of $s$ in $\mathbb{T}_n$, i.e.,
$$s'=\begin{cases}\min\{t\in\mathbb{T}_n\,|\,t>s\}&\text{if $s<1$,}\\
1&\text{if $s=1$.}
\end{cases}
$$
Now let $x$ be a function in $C[0,1]$ and $p\ge1$. The function $x$ admits the \emph{continuous $p^{\text{th}}$ variation $\<x\>_t^{(p)}$ along the sequence $(\mathbb T_n)$}, if for each $t\in[0,1]$ 
$$\<x\>_t^{(p)}:=\lim_{n\uparrow\infty}\sum_{s\in\mathbb T_n,s\le t}|x(s')-x(s)|^p
$$
exists and the function $t\mapsto \<x\>_t^{(p)}$ is continuous (see, e.g.,~\cite[Lemma 1.3]{ContPerkowski}). According to F\"ollmer~\cite{FoellmerIto} in the case $p=2$ and Cont and Perkowski~\cite{ContPerkowski} in the case of general even  $p\in\mathbb N$, this notion of $p^{\text{th}}$ variation along a refining sequence of partitions is the key to a pathwise integration theory with integrator $x$. Note that it is
different from the usual concept of \emph{finite $p$-variation}, which will be discussed at the end of this section.

\begin{theorem}\label{p-variation thm}
Let $H\in(0,1)$, $p>0$, and $x\in\mathfrak X^H$. Then, for all $t\in(0,1]$,
\begin{equation}\label{p-th variation of x}\lim_{n\uparrow\infty}\sum_{s\in\mathbb T_n,s\le t}|x(s')-x(s)|^p=\begin{cases}0&\text{if $p>1/H$,}\\
+\infty&\text{if $p<1/H$,}\\
t\cdot 2^{1-1/H}\mathbb E[|Z_H|^p]&\text{if $p=1/H$,}
\end{cases}
\end{equation}
where
$$Z_H:=\sum_{m=0}^{\infty}2^{m(H-1)}Y_{m}$$
for an  i.i.d.~sequence $Y_0,Y_1,\dots$   of $\{-1,+1\}$-valued random variables with $\mathbb P[Y_n=+1]=\frac12$. 
In particular, for $p=1/H$, each $x\in\mathfrak X^H$ admits the continuous  $p^{\text{th}}$ variation $\<x\>_t^{(p)}=t\cdot 2^{1-1/H} \mathbb E[|Z_H|^p]$ along the sequence $(\mathbb T_n)$ of dyadic partitions. 
\end{theorem}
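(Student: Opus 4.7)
The plan is to reduce the increments of $x$ over consecutive points of $\mathbb T_n$ to a combinatorial sum of $\pm 1$'s whose distribution, under uniform sampling of the starting point, coincides exactly with the truncated Bernoulli convolution $Z_H^{(n)}:=\sum_{M=0}^{n-1}2^{M(H-1)}Y_M$; convergence of moments will then yield the slope.

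First I would compute $\Delta_{n,j}:=x((j+1)2^{-n})-x(j2^{-n})$ in closed form. Because $e_{m,k}$ vanishes on $\mathbb T_m$, the Faber--Schauder terms with $m\ge n$ drop out. For $m<n$ only the unique $k(m,j):=\lfloor j/2^{n-m}\rfloor$ contributes, and on the relevant interval $e_{m,k(m,j)}$ is linear with slope $\sigma_{m,j}2^{m/2}$, where $\sigma_{m,j}=(-1)^{j_{n-m-1}}$ records the single binary digit of $j$ that decides which half of the parent support interval we are on. Writing $\epsilon_{m,j}:=\theta_{m,k(m,j)}\sigma_{m,j}$ and reindexing $M=n-1-m$, a short calculation gives
\[
\Delta_{n,j}=2^{H-1}\,2^{-nH}\,S_{n,j},\qquad S_{n,j}:=\sum_{M=0}^{n-1}2^{M(H-1)}\epsilon_{n-1-M,j}.
\]
Consequently $\sum_{j=0}^{2^n-1}|\Delta_{n,j}|^p=2^{(H-1)p}\,2^{n(1-pH)}\,\mathbb E_j\bigl[|S_{n,j}|^p\bigr]$; since $|S_{n,j}|$ is uniformly bounded by $1/(1-2^{H-1})$ and $\mathbb E_j[|S_{n,j}|^p]$ will be shown to converge to a positive constant, the prefactor $2^{n(1-pH)}$ already disposes of the off-critical cases $p\neq 1/H$.

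The crux is the critical case $p=1/H$, where I would identify the law of $S_{n,j}$ under uniform $j\in\{0,\dots,2^n-1\}$ by proving that $\epsilon_{0,j},\dots,\epsilon_{n-1,j}$ are i.i.d.\ fair Rademacher. The point is that $k(m,j)$ depends only on the high bits $j_{n-m},\dots,j_{n-1}$, so conditioning on those bits pins down $\theta_{m,k(m,j)}$; the next bit $j_{n-m-1}$, which alone determines $\sigma_{m,j}$, is still an independent fair coin, making $\epsilon_{m,j}$ a fair Rademacher independent of $\epsilon_{0,j},\dots,\epsilon_{m-1,j}$. Hence $S_{n,j}\stackrel{d}{=}Z_H^{(n)}\to Z_H$ almost surely with the uniform bound, so dominated convergence gives
\[
\sum_{j=0}^{2^n-1}|\Delta_{n,j}|^{1/H}=2^{1-1/H}\,\mathbb E\bigl[|Z_H^{(n)}|^{1/H}\bigr]\;\longrightarrow\; V^\ast:=2^{1-1/H}\,\mathbb E\bigl[|Z_H|^{1/H}\bigr],
\]
which is \eqref{p-th variation of x} at $t=1$.

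To reach general $t$, I would exploit self-similarity: a rearrangement of the series \eqref{XH} shows that on each sub-interval $[k2^{-N},(k+1)2^{-N}]$ one has $x(k2^{-N}+s\,2^{-N})=L_{N,k}(s)+2^{-NH}\tilde x^{(k,N)}(s)$ for $s\in[0,1]$, with $L_{N,k}$ affine and $\tilde x^{(k,N)}\in\mathfrak X^H$ obtained by reindexing the coefficients $\theta_{m,\cdot}$ with $m\ge N$. The affine contribution to a level-$n$ increment is of order $2^{-(n-N)}$, which at $p=1/H$ is lower order than the $2^{-nH}$-sized main term by a factor $2^{-(n-N)(1-H)}\to 0$; applying the $t=1$ result to each $\tilde x^{(k,N)}$ and summing over $k<k_0$ then yields the conclusion at every dyadic endpoint $t=k_0/2^N$, and monotonicity in $t$ together with a sandwich between the two nearest dyadics of level $N$ (letting $N\to\infty$) extends the limit to arbitrary $t\in[0,1]$, linearity giving the continuity. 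The hard part will be the distributional lemma in the third paragraph: without the observation that $\sigma_{m,j}$ and $\theta_{m,k(m,j)}$ depend on disjoint blocks of binary digits of $j$, one could only estimate, rather than evaluate, the slope $2^{1-1/H}\mathbb E[|Z_H|^{1/H}]$.
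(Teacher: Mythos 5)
Your proposal is correct and its overall architecture coincides with the paper's: reduce each dyadic increment to $2^{-n}$ times a signed sum $\sum_{m<n}2^{m(1-H)}(\pm1)$, identify the empirical distribution of the sign vectors with the uniform law on $\{-1,+1\}^n$, pass to the Bernoulli convolution $Z_H$ by dominated convergence, and then use the Faber--Schauder scaling relations plus a Minkowski-type lemma and a sandwich argument to go from $t=1$ to dyadic $t$ and then to all $t$. The one place where you genuinely diverge is the proof of the central combinatorial claim. The paper shows that $k\mapsto(\sigma_{0,k},\dots,\sigma_{n-1,k})$ is a bijection onto $\{-1,+1\}^n$ by first writing down the explicit sign matrix for $x^H$ (whose columns are the binary expansions of $2^n-1,\dots,0$) and then arguing by induction that flipping any coefficient $\theta_{m,\ell}$ to $-1$ merely permutes blocks of columns. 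You instead prove the equivalent distributional statement directly for an arbitrary $x\in\mathfrak X^H$: since $k(m,j)$ is a function of the high bits $j_{n-m},\dots,j_{n-1}$ while $\sigma_{m,j}=(-1)^{j_{n-m-1}}$ is the next bit, the products $\epsilon_{m,j}=\theta_{m,k(m,j)}\sigma_{m,j}$ form a sequence of conditionally fair coins, hence are i.i.d.\ Rademacher under uniform $j$. This is a cleaner and arguably more illuminating route, as it handles all sign choices $\theta_{m,k}$ in one stroke and makes transparent \emph{why} the answer cannot depend on $x$. A second, minor difference: you dispose of $p\neq 1/H$ directly from the exact formula $\sum_j|\Delta_{n,j}|^p=2^{(H-1)p}2^{n(1-pH)}\mathbb E_j[|S_{n,j}|^p]$, whereas the paper derives $p>1/H$ from uniform continuity and $p<1/H$ from a reverse-H\"older comparison with the critical sum; your route needs the (true but unproven in your sketch) fact that $\mathbb E[|Z_H|^p]>0$, i.e.\ that the Bernoulli convolution is non-atomic, and for $p<1/H$ at non-dyadic $t$ you should say explicitly that monotonicity of the partial sums in $t$ reduces to dyadic $t$, where the self-similar decomposition applies; also note that for $p<1$ the Minkowski step must be replaced by the elementary inequality $|a+b|^p\ge|b|^p-|a|^p$. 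These are routine repairs and do not affect the validity of the argument.
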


\begin{remark}Note that the law of $Z_H$ is the infinite Bernoulli convolution with parameter $2^{H-1}$. These laws have been studied in their own right for many decades; see, e.g.,~\cite{PeresSchlagSolomyak}. According to the It\^o-type formulas~\cite[Theorems 1.5 and 1.10]{ContPerkowski}, the most interesting case is the one  in which $p=1/H$ is an even integer. For this case, Theorem 1  from Escribano et al.~\cite{EscribanoEtAl} provides an exact formula for $E[|Z_H|^p]$ in terms of Bernoulli numbers $B_{2k}$ and partitions of $n:=p/2$, namely,
$$\mathbb E[|Z_H|^p]=(-1)^n\!\!\!\!\!\!\!\sum_{1\cdot n_1+\cdots+n\cdot n_n=n}\frac{(2n)!}{n_1!\cdots n_n!}\prod_{k=1}^n\bigg(\frac{1}{(2k)!}\frac{(-1)^k}{2k}2^{2k}(2^{2k}-1)B_{2k}\frac{(1-2^{H-1})^{2k}}{1-2^{2k(H-1)}}\bigg)^{n_k}.
$$ 
If $1/H$ is not an even integer, then \eqref{p-th variation of x} yields
 that any such $x\in\mathfrak X^H$ will have vanishing, and hence continuous, $p^{\text{th}}$ variation along $(\mathbb T_n)$ for  any  $p>1/H$, and so~\cite[Theorems 1.5 and 1.10]{ContPerkowski} can be applied with any even integer $p>1/H$. \end{remark}

\begin{remark}It is known that  fractional Brownian motion $B^H$ with Hurst parameter $H\in(0,1)$ satisfies almost surely,
\begin{equation}\label{fBM variation eq}
\lim_{n\uparrow\infty}\sum_{s\in\mathbb T_n,s\le t}|B^H(s')-B^H(s)|^p=\begin{cases}0&\text{if $p>1/H$,}\\
+\infty&\text{if $p<1/H$,}\\
t\cdot \mathbb E[|B^H(1)|^p]&\text{if $p=1/H$,}
\end{cases}
\end{equation}
for $t>0$; see~\cite[Section 1.18]{Mishura}.
 By Lemma~\ref{additive p-var lemma},  the same is true if, on the left-hand side of \eqref{fBM variation eq}, $B^H$ is replaced by a fractional Brownian bridge. Therefore, our  Theorem~\ref{p-variation thm} establishes yet another similarity between fractional Brownian sample paths and the signed Takagi--Landsberg functions in $\mathfrak X^H$. \end{remark}
\begin{figure}
\begin{center}
\includegraphics[width=10cm]{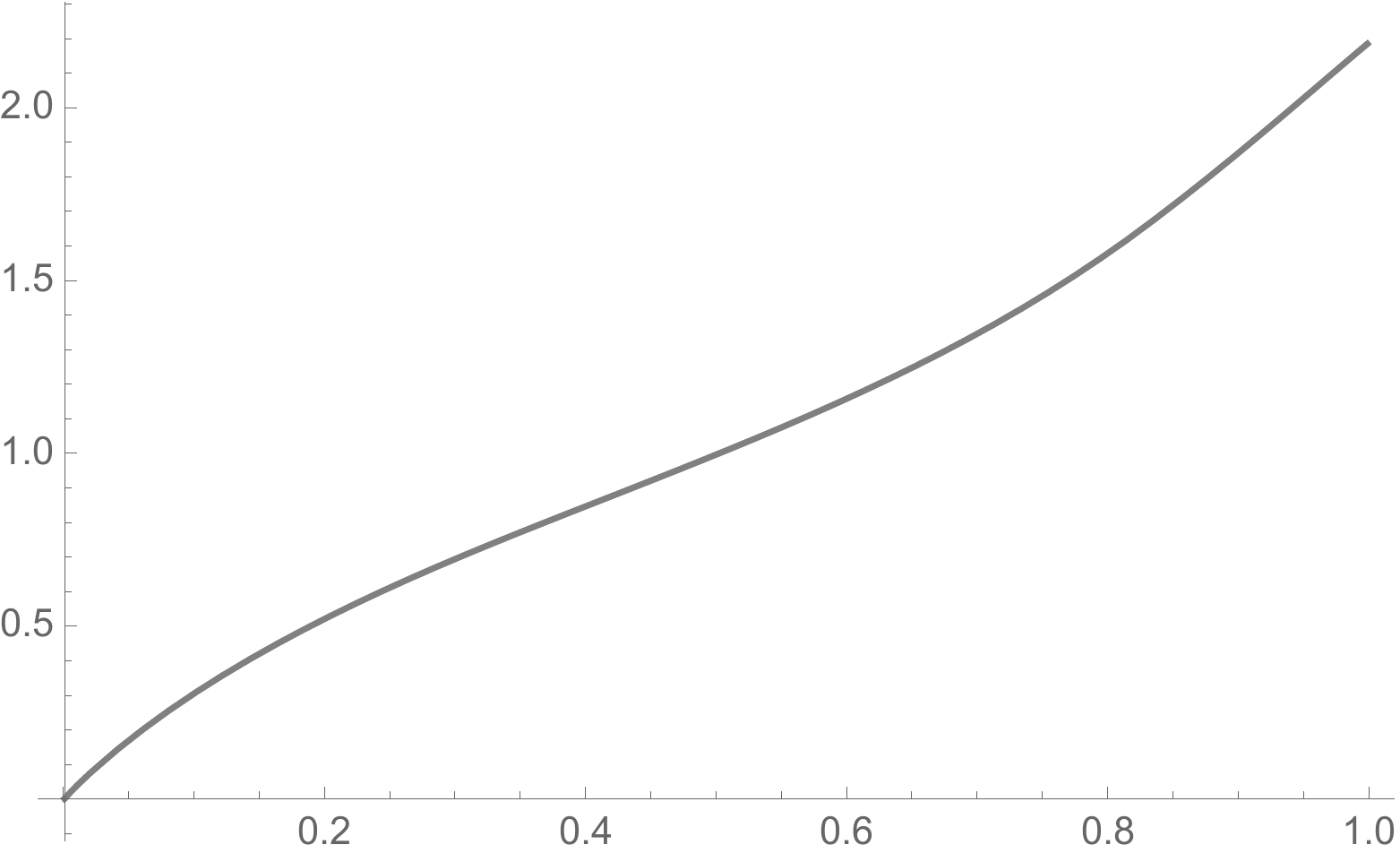}
\end{center}
\caption{The value $\<x^H\>_1^{(1/H)}=2^{1-1/H}\mathbb E[|Z_H|^{1/H}]$ as a function of $H\in(0,1)$.}\label{Figure3}
\end{figure}

 In \cite[Remark 1.7]{ContPerkowski}, it is stated that  an It\^o-type formula with integrator $x\in C[0,1]$ could hold also if $p$ is an odd integer, provided that $x$ admits a continuous $p^{\text{th}}$ variation along $(\mathbb T_n)$ and 
$\sum_{s\in\mathbb T_n,s\le t}(x(s')-x(s))^p$ converges for all $t$ to a continuous function of bounded variation. However,  \cite[Remark 1.7]{ContPerkowski} furthermore states  that 
\lq\lq for odd p we typically expect the limit to be zero", and a corresponding example is provided in the appendix of \cite{ContPerkowski}. The following corollary confirms this claim also in our situation.

\begin{corollary}\label{odd p cor}Suppose that $p=1/H$ is an odd integer and that $x\in\mathfrak{X}^H$. Then, for all $t\in[0,1]$,
$$\lim_{n\uparrow\infty}\sum_{s\in\mathbb T_n,s\le t}(x(s')-x(s))^p=0.
$$
\end{corollary}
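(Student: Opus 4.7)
My plan splits the argument into three steps: an exact cancellation at $t=1$ via a bijection, an extension to dyadic $t$ via the self-similar structure of $\mathfrak{X}^H$, and the reduction of arbitrary $t$ to dyadic $t$ using Theorem~\ref{p-variation thm}.

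\emph{Step 1 (exact vanishing at $t=1$).} For $s=k/2^n\in\mathbb{T}_n$ and $s'=(k+1)/2^n$, only the Faber--Schauder functions $e_{m,k'}$ with $m<n$ contribute to $x(s')-x(s)$, since higher levels vanish at dyadic points of order $n$; each such $e_{m,k'}$ is linear on $[s,s']$ with slope $\pm 2^{m/2}$. Collecting the contributions gives
$$x(s')-x(s)=2^{-n}\sum_{m=0}^{n-1}2^{m(1-H)}\xi_m^{(k)},\qquad\xi_m^{(k)}:=\theta_{m,\lfloor k/2^{n-m}\rfloor}\,\sigma_m^{(k)}\in\{-1,+1\},$$
where $\sigma_m^{(k)}$ records whether $[s,s']$ lies in the ascending or descending half of the tent $e_{m,\lfloor k/2^{n-m}\rfloor}$. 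Inspecting the binary expansion of $k$, one verifies that $k\mapsto(\xi_0^{(k)},\ldots,\xi_{n-1}^{(k)})$ is a bijection from $\{0,\ldots,2^n-1\}$ onto $\{-1,+1\}^n$. Since $p$ is odd, the involution $\xi\mapsto-\xi$ negates each term, forcing $\sum_{s\in\mathbb{T}_n}(x(s')-x(s))^p=0$ exactly, for every $n\ge 1$.

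\emph{Step 2 (dyadic $t$).} Iterating the self-similar identities $x(t)=\theta_{0,0}\,t+2^{-H}y(2t)$ on $[0,1/2]$ and $x(t)=\theta_{0,0}(1-t)+2^{-H}z(2t-1)$ on $[1/2,1]$ (with $y,z\in\mathfrak{X}^H$), one shows that on each dyadic interval $[j/2^N,(j+1)/2^N]$ the function $x$ has the form $L_j(t)+2^{-NH}\tilde{y}_j(2^N(t-j/2^N))$ with $\tilde{y}_j\in\mathfrak{X}^H$ and $L_j$ affine of slope $|L_j'|\le C_H\cdot 2^{N(1-H)}$. Binomial expansion of each level-$n$ increment then yields
$$\sum_{k=j 2^{n-N}}^{(j+1)2^{n-N}-1}(x(s')-x(s))^p=\sum_{i=0}^p\binom{p}{i}(2^{-n}L_j')^i(2^{-NH})^{p-i}\sum_{k'=0}^{2^{n-N}-1}\bigl(\Delta_{k',n-N}^{\tilde{y}_j}\bigr)^{p-i},$$
where $\Delta_{k',n-N}^{\tilde{y}_j}$ is the level-$(n-N)$ increment of $\tilde{y}_j$ at index $k'$. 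The $i=0$ term vanishes exactly by Step~1 applied to $\tilde{y}_j$ at level $n-N$. Using $|L_j'|=O(2^{N(1-H)})$ together with the uniform bound $|\Delta_{k',n-N}^{\tilde{y}_j}|=O(2^{-(n-N)H})$, the $i$-th term is $O(2^{-i(n-N)(1-H)})$, which vanishes as $n\to\infty$ for every $i\ge 1$ since $H<1$. Summing over the $K_0\le 2^N$ subintervals below $t=K_0/2^N$ yields $\sum_{s\in\mathbb{T}_n,s\le t}(x(s')-x(s))^p\to 0$ for every dyadic $t$.

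\emph{Step 3 (arbitrary $t$).} Given $t\in[0,1]$, set $t_N:=\lfloor t\cdot 2^N\rfloor/2^N$. Splitting at $t_N$ and using the triangle inequality,
$$\Big|\sum_{s\in\mathbb{T}_n,s\le t}(x(s')-x(s))^p\Big|\le\Big|\sum_{s\le t_N}(x(s')-x(s))^p\Big|+\sum_{t_N<s\le t}|x(s')-x(s)|^p.$$
The first term vanishes as $n\to\infty$ by Step~2, and by Theorem~\ref{p-variation thm} applied at $t$ and at $t_N$ the second converges to $(t-t_N)\cdot 2^{1-1/H}\mathbb{E}[|Z_H|^p]\le 2^{-N}\cdot 2^{1-1/H}\mathbb{E}[|Z_H|^p]$. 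Sending first $n\to\infty$ and then $N\to\infty$ completes the proof. The main technical obstacle is the accounting in Step~2: the slope bound $|L_j'|=O(2^{N(1-H)})$ must be sharp enough for all subleading binomial contributions to decay uniformly in $j$, and this is precisely where the assumption $H<1$ (as opposed to merely $H\le 1$) becomes essential.
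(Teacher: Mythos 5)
Your proof is correct and follows essentially the same route as the paper: the decisive step at $t=1$ is the same bijection of the sign vectors onto $\{-1,+1\}^n$ combined with the odd symmetry $\xi\mapsto-\xi$ (the paper phrases this as the vanishing of $\mathbb E\big[\big(\sum_m 2^{m(H-1)}Y_m\big)^p\big]$ for symmetric $Y_m$), and the passage to general $t$ uses the same self-similar scaling of $\mathfrak X^H$ followed by a sandwich/tail estimate. The one place where you add substance is Step 2: the paper only says to ``repeat the scaling arguments'' of Theorem~\ref{p-variation thm}, but those arguments dispose of the affine part via Lemma~\ref{additive p-var lemma} (Minkowski), which applies to $|\cdot|^p$ and not to the signed sum, whereas your binomial expansion of $\big(2^{-n}L_j'+2^{-NH}\Delta\big)^p$ --- with the $i\ge1$ terms decaying like $2^{-i(n-N)(1-H)}$ precisely because $H<1$ --- is exactly the correction that step requires.
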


\subsection{Maximum and modulus of continuity of (signed) Takagi--Landsberg functions}\label{TL function section}

Our next result concerns the maximum of the Takagi--Landsberg function $x^H$ for $0<H<1$. The maximum of the classical Takagi function corresponding to $H=1$ was obtained by Kahane~\cite{Kahane}, and the case $H=\frac12$ can be deduced  from Lemma 5 in Galkina~\cite{Galkina}, which was later rediscovered by the second author in~\cite[Lemma 3.1]{SchiedTakagi}. The corresponding result  for the maximum of $x^{1/2}$ was stated  independently in Galkin and Galkina~\cite{GalkinGalkina} and in~\cite{SchiedTakagi}.  This, however, does not include the uniqueness of the maximizers $t =\frac13$ and $t=\frac23$, which is an original contribution of the present paper.

\begin{theorem}\label{max thm}
For $0<H<1$, we have
$$\max_{t\in[0,1]}x^H(t)=\frac{1}{3(1-2^{-H})}.
$$
	Moreover, $t =\frac13$ and $t=\frac23$ are the unique points at which the function $x^H(t)$ attains its maximum.\end{theorem}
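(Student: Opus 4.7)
The natural starting point is the self-similarity of $x^H$. With $\phi(t) := e_{0,0}(t) = \min(t,1-t)$ and $\sigma(t) := 2t$ for $t \in [0,\tfrac12]$, $\sigma(t) := 2t-1$ for $t \in [\tfrac12,1]$, separating the $m=0$ term in \eqref{TL fctn eq} and re-indexing the remaining sum yields the functional equation
\[
x^H(t) = \phi(t) + 2^{-H}\, x^H(\sigma(t)), \qquad t \in [0,1].
\]
I would also record the even symmetry $x^H(1-t)=x^H(t)$, which follows from the pointwise identity $e_{m,k}(1-\cdot) = e_{m,\,2^m-1-k}(\cdot)$ at each level and allows me to restrict attention to $[0,\tfrac12]$.

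To check the target value, observe that $\sigma(\tfrac13)=\tfrac23$ and $\sigma(\tfrac23)=\tfrac13$, so iterating the functional equation twice at $t=\tfrac13$ yields $x^H(\tfrac13) = \tfrac13 + 2^{-H}\cdot\tfrac13 + 2^{-2H}\, x^H(\tfrac13)$, which solves explicitly to $x^H(\tfrac13) = \tfrac{1}{3(1-2^{-H})} =: L$; by symmetry $x^H(\tfrac23)=L$ as well. Setting $M := \max_{t \in [0,1]} x^H(t)$, which is attained by continuity, this already gives $M \ge L$.

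For the matching upper bound $M \le L$, the idea is to partition $[0,1]$ into $I_1 := [0,\tfrac13]\cup[\tfrac23,1]$ and $I_2 := [\tfrac13,\tfrac23]$, iterating the functional equation once on $I_1$ and twice on $I_2$. On $I_1$ one has $\phi(t)\le\tfrac13$, so $x^H(t) \le \tfrac13 + 2^{-H} M$. On $I_2$, the two-step iteration $x^H(t) = \phi(t) + 2^{-H}\phi(\sigma(t)) + 2^{-2H} x^H(\sigma^2(t))$ reduces the task to bounding the auxiliary function $\phi(\cdot) + 2^{-H}\phi(\sigma(\cdot))$; a short case analysis shows it is affine with slope $1-2^{1-H}<0$ on $[\tfrac13,\tfrac12]$ and affine with positive slope $2^{1-H}-1$ on $[\tfrac12,\tfrac23]$, hence maximised on $I_2$ only at the endpoints $\tfrac13,\tfrac23$ with common value $(1+2^{-H})/3$. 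Combining,
\[
M \le \max\!\Bigl(\tfrac13 + 2^{-H} M,\ \tfrac{1+2^{-H}}{3} + 2^{-2H} M\Bigr),
\]
and each of the two branches collapses algebraically to $M \le \tfrac{1}{3(1-2^{-H})}=L$.

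For uniqueness I would retrace the two bounds in the equality case. If $x^H(t^*)=L$ with $t^* \in I_1$, then $L = x^H(t^*) \le \phi(t^*) + 2^{-H} L$ forces $\phi(t^*) = \tfrac13$, whence $t^* \in \{\tfrac13, \tfrac23\}$. If $t^* \in I_2$, strict monotonicity from the case analysis forces equality in $\phi(t^*) + 2^{-H}\phi(\sigma(t^*)) \le (1+2^{-H})/3$, which again occurs only at the endpoints $\tfrac13,\tfrac23$. The main obstacle I anticipate is identifying the correct case split: exactly two iterations suffice because $\{\tfrac13,\tfrac23\}$ is a period-$2$ orbit of $\sigma$, and the split points $\tfrac13,\tfrac23$ are forced by the algebraic match $L(1-2^{-H}) = \tfrac13$ against $\phi(t) \le \tfrac13$. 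The assumption $H<1$ enters essentially through the sign of $1-2^{1-H}$, paralleling the fact that the maximizer structure changes fundamentally at $H=1$ as shown by Kahane.
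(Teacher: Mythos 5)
Your proof is correct, and it takes a genuinely different route from the paper. The paper proceeds by induction on the truncated partial sums $x_n^H$ (Lemma~\ref{lem1.1}): it identifies the unique maximizers $t_n^\pm=2^{-n}J_n,\,1-2^{-n}J_n$ via Jacobsthal numbers and the exact maxima $M_n^H$, passes to the limit, and then runs a separate contradiction argument comparing dyadic neighbourhoods of a hypothetical second maximizer with those of $\tfrac13$ to get uniqueness. You instead exploit the self-similarity relation $x^H(t)=e_{0,0}(t)+2^{-H}x^H(\sigma(t))$ for the dyadic shift $\sigma$, together with the fact that $\{\tfrac13,\tfrac23\}$ is the period-$2$ orbit of $\sigma$: one iteration on $[0,\tfrac13]\cup[\tfrac23,1]$ and two on $[\tfrac13,\tfrac23]$ give the matching upper bound, and uniqueness falls out of tracing the equality case through the strict inequalities ($e_{0,0}(t)<\tfrac13$ off the endpoints, and the auxiliary function $e_{0,0}+2^{-H}e_{0,0}\circ\sigma$ strictly peaked at $\tfrac13,\tfrac23$ because its slopes $\pm(2^{1-H}-1)$ are nonzero for $H<1$). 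I checked the functional equation, the evaluation $x^H(\tfrac13)=\tfrac1{3(1-2^{-H})}$, the two-branch bound, and the equality analysis; all are sound. Your approach is shorter and more self-contained, and it makes transparent why uniqueness degenerates exactly at $H=1$ (the slope $1-2^{1-H}$ vanishes); what it gives up is the finite-level information the paper's lemma provides — the explicit maxima $M_n^H$ of all partial sums and the approximating maximizers $t_n^-\to\tfrac13$ — which is of independent interest but is not needed elsewhere in the paper.
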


Note that the maximum points, $t =\frac13$ and $t=\frac23$, are independent of the Hurst parameter $H$ as long as $0<H<1$. This changes at $H=1$, where the maximum is attained at an uncountable Cantor-type set of Hausdorff dimension $\frac12$ (see~\cite{Kahane} or~\cite[Theorem 3.1]{AllaartKawamura}).  For certain cases $H\in(1,2)$, the maximal values of $x^H$ were obtained by Tabor and Tabor \cite{TaborTabor}. For $H\ge 2$, it is easy to see that $x^H$ attains its unique maximum at $t=1/2$; see \cite[Theorem 4]{GalkinGalkina} for details. For $H=1$, the distribution of the maximum of functions in $\mathfrak X^H$ with random coefficients $\theta_{n,k}$ was studied by Allaart \cite{AllaartRandomMax}. Theorem~\ref{max thm} yields the following corollary, which extends~\cite[Theorem 2.2]{SchiedTakagi}, where the particular case $H=1/2$ was treated.

\begin{corollary}\label{osc cor} The uniform maximum of functions  $x\in \mathfrak{ X}^H$ is attained by $x^H$ and equals $$\max_{x\in\mathfrak X^H}\max_{t\in[0,1]}x(t)=\max_{t\in[0,1]}x^H(t)=\frac{1}{3(1-2^{-H})}.$$ The maximal uniform oscillation of functions  $x\in\mathfrak X^H$ is given by
\begin{equation}\label{uniform osc cor eq}
\max_{x\in\mathfrak{ X}^H}\max_{s, t\in[0,1]}|x(t)-x(s)|=\frac{2^H+3}{6(2^H-1)}.
\end{equation}
It is attained for the points $s=\frac13$ and  $t=\frac56$ and for the function 
\begin{equation}\label{wt x}
\wt x^H  =e_{0,0} +\sum_{m=1}^\infty2^{m(\frac12-H)}\left(\sum_{k=0}^{2^{m-1}-1}e_{m,k} -\sum_{k=2^{m-1}}^{2^m-1}e_{m,k} \right)\in\mathfrak X^H.
\end{equation}
\end{corollary}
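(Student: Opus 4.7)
The uniform-maximum statement is immediate: since $e_{m,k}\ge 0$ on $[0,1]$, every $x\in\mathfrak{X}^H$ satisfies $x(t)\le x^H(t)$ pointwise, so $\max_t x(t)\le \max_t x^H(t)=\frac{1}{3(1-2^{-H})}=:M$ by Theorem~\ref{max thm}, with equality for $x=x^H$.

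For the maximal oscillation, I would first exhibit the lower bound by computing $\wt x^H(1/3)-\wt x^H(5/6)$ directly. The right-half basis functions $e_{m,k}$ (those with $k\ge 2^{m-1}$) in \eqref{wt x} vanish on $[0,1/2]$, so $\wt x^H=x^H$ there and in particular $\wt x^H(1/3)=M$; on $[1/2,1]$ one has $\wt x^H(t)=2e_{0,0}(t)-x^H(t)$, and the self-similarity relation $x^H(5/6)=\psi(5/6)+2^{-H}x^H(2/3)=1/6+2^{-H}M$ (where $\psi$ is distance to the nearest integer) gives $\wt x^H(5/6)=1/6-2^{-H}M$. Substituting $M=\frac{1}{3(1-2^{-H})}$ and simplifying then yields $\wt x^H(1/3)-\wt x^H(5/6)=\frac{2^H+3}{6(2^H-1)}$.

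Next, I would establish the matching upper bound. Since $\max_{\theta\in\{\pm 1\}}\theta\bigl(e_{m,k}(t)-e_{m,k}(s)\bigr)=|e_{m,k}(t)-e_{m,k}(s)|$ term-by-term,
\[
\max_{x\in\mathfrak{X}^H}|x(t)-x(s)| \;=\; d(s,t)\;:=\;\sum_{m,k}2^{m(1/2-H)}\,|e_{m,k}(t)-e_{m,k}(s)|,
\]
so it suffices to show $d(s,t)\le 2M-\tfrac12=\frac{2^H+3}{6(2^H-1)}$ for all $s,t\in[0,1]$. The pivotal ingredient is the scaling identity $x^H(u/2)=u/2+2^{-H}x^H(u)$ for $u\in[0,1]$, which follows at once from the functional equation for $x^H$ since $\psi(u/2)=u/2$ on this range. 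Combined with Theorem~\ref{max thm} and the reflection symmetry $x^H(u)=x^H(1-u)$, this produces the linear estimate
\[
2^{-H}x^H(u)\;\le\; M-\frac{1+|2u-1|}{4},\qquad u\in[0,1].
\]

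Finally, I would split into cases. When $s\in[0,1/2]$ and $t\in[1/2,1]$, the disjoint supports of the left- and right-half Faber--Schauder basis functions yield $d(s,t)=|s+t-1|+2^{-H}\bigl(x^H(2s)+x^H(2t-1)\bigr)$; inserting the linear estimate with $A:=4s-1,\ B:=4t-3\in[-1,1]$ gives $d(s,t)\le 2M-\tfrac12+(|A+B|-|A|-|B|)/4\le 2M-\tfrac12$ by the triangle inequality, with equality precisely when $A,B$ have the same sign and $|A|=|B|=1/3$, i.e.\ at $(s,t)\in\{(1/3,5/6),(1/6,2/3)\}$. The remaining cases $s,t\in[0,1/2]$ and $s,t\in[1/2,1]$ admit the recursive formulas $d(s,t)=|s-t|+2^{-H}d(2s,2t)$ and $d(s,t)=|s-t|+2^{-H}d(2s-1,2t-1)$ respectively; bounding the inner $d$ by $x^H+x^H$ and applying the same linear estimate (with $A=4s-1,\,B=4t-1$ in the first and $A=4s-3,\,B=4t-3$ in the second) produces $d\le 2M-\tfrac12$ via $|A-B|\le|A|+|B|$. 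The main hurdle is locating the scaling identity for $x^H(u/2)$ and then choosing the change of variables in each case so that the triangle inequality cleanly absorbs the leading difference (or sum) term; once that is in place, the rest is bookkeeping.
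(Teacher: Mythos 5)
Your proof is correct, but it takes a genuinely different route from the paper's. The paper disposes of the whole corollary in a few lines by invoking the inequality $\max_{s,t\in[0,1]}|x(s)-x(t)|\leq \max_{s\in[0,1/2]}\max_{t\in[1/2,1]}|\wt x^H(s)-\wt x^H(t)|$, whose proof is outsourced to (3.9) of the cited $H=\tfrac12$ paper, and then simply evaluates $\wt x^H$ on the two halves ($\wt x^H=x^H$ on $[0,\tfrac12]$ with maximum $M$ at $\tfrac13$; $\wt x^H(t)=\tfrac12-x^H(t-\tfrac12)$ on $[\tfrac12,1]$ with minimum $\tfrac12-M$ at $\tfrac56$), which is equivalent to your lower-bound computation. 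You instead give a self-contained upper bound: the term-by-term sign optimization $\max_{x\in\mathfrak X^H}|x(t)-x(s)|=d(s,t)=\sum 2^{m(1/2-H)}|e_{m,k}(t)-e_{m,k}(s)|$, the self-similar decompositions of $d$ on the three half-square cases, and the linear estimate $2^{-H}x^H(u)\le M-\frac{1+|2u-1|}{4}$ obtained from $x^H(u/2)=u/2+2^{-H}x^H(u)$, reflection symmetry, and Theorem~\ref{max thm}. I checked the key identities ($d(s,t)=|s+t-1|+2^{-H}(x^H(2s)+x^H(2t-1))$ for $s\le\tfrac12\le t$, the recursions in the same-half cases, and the equality analysis via the triangle inequality) and they are all sound; the final values agree since $\tfrac16-2^{-H}M=\tfrac12-M$. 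What your approach buys is a fully self-contained argument that does not lean on the earlier reference and that additionally identifies, via the equality case $|A|=|B|=\tfrac13$ with equal signs, exactly which pairs $(s,t)$ realize the maximal oscillation; what it costs is length, and it quietly uses the functional equation $x^H(t)=\psi(t)+2^{-H}x^H(2t)$ and the uniqueness of the maximizers from Theorem~\ref{max thm}, both of which are available from the paper's scaling properties \eqref{emk scaling prop} but would need to be stated explicitly in a polished write-up.
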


 Equation \eqref{uniform osc cor eq} will be needed for the proof of our next result, which provides a modulus of continuity for $x^H$. For $h>0$, we let $\nu(h)=\lfloor-\log_2h\rfloor,$ where $\lfloor a\rfloor$ is the biggest integer not exceeding $a$. Then we define 
\begin{equation}\label{omegaH}
\omega_H(h)=\frac{h2^{(\nu(h)-1)(1-H)}}{2^{1-H}-1}+\frac{2^{(1-\nu(h))H} }{3(1-2^{-H})}.
\end{equation}
We have
 $2^{H-1}h^H\leq h2^{\nu(h)(1-H)}\leq h^H$, and $h^H\leq 2^{-\nu(h)H}\leq 2^Hh^H$. It follows that there exists a constant $C>0$ such that 
 \begin{equation}\label{omega=Theta(h)}
 C^{-1}h^H\le\omega_H(h)\le Ch^H\qquad\text{for all sufficiently small $h>0$.}
 \end{equation}
  For $H=\frac12$, the modulus of continuity of $x^H$ was obtained in~\cite[Theorem 2.3 (a)]{SchiedTakagi}. 
  The following theorem extends this result  to all $H\in(0,1)$. Note also that our expression for $\omega_H$ does not extend to the case $H=1$. As a matter of fact, for $H\ge1$, other effects occur and a different method must be used; see~\cite{Kono} and~\cite{Allaartflexible}.

\begin{theorem}\label{xH modulus thm} The following assertions hold.
\begin{enumerate}
\item For all $h\in[0,1)$ and $t\in[0,1-h]$, we have
$|x^H(t+h)-x^H(t)|\le \omega_H(h)$.
\item The inequality in {\rm (a)} is sharp in the sense that 
\begin{align}\limsup_{h\downarrow 0}\max_{t\in[0, 1-h]}\frac{|x^H(t+h)-x^H(t)|}{\omega_H(h)}&=1.\label{modulo 1}
 \end{align}
\end{enumerate}
\end{theorem}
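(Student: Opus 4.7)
The plan is to decompose $x^H(t)=\sum_{m=0}^\infty 2^{-mH}\phi_m(t)$, where $\phi_m(t):=\sum_{k=0}^{2^m-1}e_{0,0}(2^m t-k)$ is the nonnegative $2^m$-Lipschitz triangular wave of maximum $\tfrac12$, which satisfies the self-similar identity $\phi_m(t)=\phi_{m-1}(2t)$ on $[0,\tfrac12]$ for $m\ge 1$. For part~(a), I will set $N:=\nu(h)$, so that $2^{-N-1}<h\le 2^{-N}$, and split the series at index $m=N-1$. For the low-frequency part, the Lipschitz bound $|\phi_m(t+h)-\phi_m(t)|\le 2^m h$ gives
\begin{equation*}
\sum_{m=0}^{N-2}2^{-mH}|\phi_m(t+h)-\phi_m(t)|\le h\cdot\frac{2^{(N-1)(1-H)}-1}{2^{1-H}-1}\le\frac{h\cdot 2^{(\nu(h)-1)(1-H)}}{2^{1-H}-1},
\end{equation*}
which is exactly the first term of $\omega_H(h)$; the boundary cases $N\in\{0,1\}$ are handled by the tail bound alone.

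For the tail $R(t):=\sum_{m\ge N-1}2^{-mH}\phi_m(t)$, the self-similarity of $\phi_m$ implies that on each subinterval $[j\cdot 2^{1-N},(j+1)\cdot 2^{1-N}]$ of $\mathbb T_{N-1}$ one has $R(t)=2^{(1-N)H}x^H(2^{N-1}t-j)$. Because $h\le 2^{-N}$ is at most half of the subinterval length $2^{1-N}$, the interval $[t,t+h]$ either lies inside a single such subinterval or crosses exactly one $\mathbb T_{N-1}$-boundary. In the first case, Theorem~\ref{max thm} and $x^H\ge 0$ bound the oscillation of $R$ by $2^{(1-N)H}\max_{[0,1]}x^H=\frac{2^{(1-\nu(h))H}}{3(1-2^{-H})}$, matching the second term of $\omega_H(h)$. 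In the straddling case, $R(t)$ and $R(t+h)$ equal $2^{(1-N)H}x^H(u)$ and $2^{(1-N)H}x^H(v)$ respectively, with $u\in[\tfrac12,1]$ and $v\in[0,\tfrac12]$; the symmetry $x^H(u)=x^H(1-u)$ (evident from the series) moves both arguments into $[0,\tfrac12]$, and $0\le x^H\le\max x^H$ then yields the same bound. Adding the two estimates gives $|x^H(t+h)-x^H(t)|\le\omega_H(h)$.

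For part~(b), I will iterate the functional equation $x^H(t)=t+2^{-H}x^H(2t)$ valid on $[0,\tfrac12]$ (a direct consequence of $\phi_m(t)=\phi_{m-1}(2t)$) together with the value $x^H(\tfrac13)=\frac{1}{3(1-2^{-H})}$ from Theorem~\ref{max thm}. With $t_n:=0$ and $h_n:=\tfrac13\cdot 2^{1-n}$, one checks $\nu(h_n)=n$, and iteration of the functional equation yields
\begin{equation*}
x^H(h_n)=\frac{2^{(1-n)H}}{3(1-2^{-H})}+\frac{2^{(1-n)H}-2^{1-n}}{3(2^{1-H}-1)}.
\end{equation*}
A direct substitution then gives $\omega_H(h_n)-x^H(h_n)=\frac{2^{1-n}}{3(2^{1-H}-1)}$. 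Since $\omega_H(h_n)$ is of order $2^{-nH}$ while the deficit is of order $2^{-n}$ and $H<1$, the ratio $x^H(h_n)/\omega_H(h_n)$ converges to $1$, establishing \eqref{modulo 1}.

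I expect the straddling case in part~(a) to be the main obstacle: a naive triangle-inequality bound through the grid point lying between $t$ and $t+h$ incurs an extra factor of $2$ in the tail contribution, which would spoil the constant in the second term of $\omega_H$. Saving this factor crucially requires combining the explicit value of $\max x^H$ from Theorem~\ref{max thm}, the symmetry $x^H(t)=x^H(1-t)$, and the non-negativity of $x^H$.
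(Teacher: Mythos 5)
Your proof is correct and follows essentially the same route as the paper: split the Faber--Schauder series at level $\nu(h)$, bound the low frequencies by the Lipschitz estimate to get the first term of $\omega_H$, bound the tail by $2^{(1-\nu(h))H}\max x^H$ via self-similarity and Theorem~\ref{max thm}, and use $t=0$, $h_n=\tfrac23 2^{-n}$ for sharpness. The only (harmless) difference is in the straddling case: the paper combines the two adjacent level-$(n-1)$ Faber--Schauder functions into an explicit ``valley'' function $y=x^H\circ\varphi-\tfrac12$, whereas you simply note that the tail $R$ takes values in $[0,\,2^{(1-\nu(h))H}\max x^H]$ on every $\mathbb T_{\nu(h)-1}$-cell, so any two of its values differ by at most that amount --- a slight streamlining of the same idea (your appeal to the symmetry $x^H(u)=x^H(1-u)$ is not even needed there).
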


Our next result identifies the uniform modulus of continuity of the functions in the class $\mathfrak X^H$. It extends~\cite[Theorem 2.3 (b)]{SchiedTakagi}, where the special case $H=\frac12$ was treated.

\begin{theorem}\label{modulus thm} The following assertions hold for $\omega_H$ as in \eqref{omegaH}.
\begin{enumerate}
\item For all $h\in[0,1)$ and $t\in[0,1-h]$,
$$\max_{x\in\mathfrak{X}^H}|x(t+h)-x(t)|\le 2^{1-H}\omega_H(h).
$$
\item The inequality from part {\rm(a)} is sharp in the sense that 
\begin{align}
 \limsup_{h\downarrow 0}\max_{t\in[0, 1-h]}\frac{|\wt x^H (t+h)-\wt x^H (t)|}{\omega_H(h)}&=2^{1-H},\label{modulo 2}
 \end{align}
 where $\wt x^H\in\mathfrak X^H$ is as in \eqref{wt x}.
 In particular, the function $2^{1-H}\omega_H(h)$ is a uniform modulus of continuity for the class $\mathfrak{X}^H$,
 \begin{equation}\label{modulo 3}\limsup_{h\downarrow 0}\sup_{x\in\mathfrak{ X}^H}\max_{t\in[0, 1-h]}\frac{|x(t+h)-x(t)|}{2^{1-H}\omega_H(h)}=\limsup_{h\downarrow 0}\max_{t\in[0, 1-h]}\frac{|\wt x^H (t+h)-\wt x^H (t)|}{2^{1-H}\omega_H(h)}=1.  
 \end{equation}
\end{enumerate}
\end{theorem}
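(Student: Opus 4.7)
The plan is to exploit a self-similar dyadic decomposition of an arbitrary $x\in\mathfrak X^H$, combined with the uniform bounds from Corollary~\ref{osc cor}. Fix $h\in[0,1)$, set $\nu:=\nu(h)$, and write $x=L_\nu+T_\nu$, where $L_\nu$ denotes the partial Faber--Schauder sum up to scale $\nu-1$ (equivalently, the piecewise-linear interpolation of $x$ on the grid $\mathbb T_\nu$). A rescaling identity in the spirit of the self-similarity of $x^H$ will show that, on each interval $[k/2^\nu,(k+1)/2^\nu]$, the tail admits the representation $T_\nu(t)=2^{-\nu H}\wt y^{(k)}(2^\nu t-k)$ for some $\wt y^{(k)}\in\mathfrak X^H$ whose Faber--Schauder coefficients $\wt\theta_{m,\ell}=\theta_{m+\nu,\,2^m k+\ell}$ are inherited from those of $x$.

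For part~(a), I will first bound the maximal slope $S_\nu$ of $L_\nu$ inductively. Since $T_\nu$ vanishes on $\mathbb T_\nu$, the value $x((2k+1)/2^{\nu+1})$ differs from the average of $x(k/2^\nu)$ and $x((k+1)/2^\nu)$ by exactly $\theta_{\nu,k}\cdot 2^{-\nu H-1}$; hence the slopes of $L_{\nu+1}$ on the two halves of $[k/2^\nu,(k+1)/2^\nu]$ differ from the slope of $L_\nu$ by $\pm\theta_{\nu,k}\cdot 2^{\nu(1-H)}$, so $S_\nu\le\sum_{m=0}^{\nu-1}2^{m(1-H)}=(2^{\nu(1-H)}-1)/(2^{1-H}-1)$. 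Combined with $|\wt y^{(k)}|\le M:=\max x^H=1/(3(1-2^{-H}))$ from Corollary~\ref{osc cor} (using $\wt y^{(k)}(0)=\wt y^{(k)}(1)=0$ and the symmetry $\mathfrak X^H=-\mathfrak X^H$) and a case split on whether $[t,t+h]$ lies inside one level-$\nu$ interval or straddles two consecutive ones, this yields
$$|x(t+h)-x(t)|\le S_\nu\,h+2\cdot 2^{-\nu H}M,$$
which a direct algebraic simplification identifies with $2^{1-H}\omega_H(h)$, proving part~(a).

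For part~(b), I will exploit the reflection symmetry of $\wt x^H$ about $t=\tfrac12$: a case analysis of \eqref{wt x} gives $\wt y^{(k)}=x^H$ for $k<2^{\nu-1}$ and $\wt y^{(k)}=-x^H$ for $k\ge 2^{\nu-1}$. I then test the modulus at $h_n=2^{1-n}/3$ and $t_n=\tfrac12-2^{-n}/3$. A direct check shows $2^nt_n-(2^{n-1}-1)=\tfrac23$ and $2^n(t_n+h_n)-2^{n-1}=\tfrac13$, so by Theorem~\ref{max thm} the two self-similar contributions at $t_n$ and $t_n+h_n$ equal $+2^{-nH}M$ and $-2^{-nH}M$, adding constructively to $-2\cdot 2^{-nH}M$. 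The linear piece $L_n(t_n+h_n)-L_n(t_n)$ is then evaluated using $\wt x^H(\tfrac12)=\tfrac12$, the formulas $\wt x^H(t)=t+2^{-H}x^H(2t)$ on $[0,\tfrac12]$ and $\wt x^H(t)=1-t-2^{-H}x^H(2t-1)$ on $[\tfrac12,1]$, together with the elementary identity $x^H(2^{-m})=(2^{-mH}-2^{-m})/(2^{1-H}-1)$. The main obstacle is the bookkeeping to verify that the sum of these two contributions, divided by $\omega_H(h_n)$, converges to exactly $2^{1-H}$; after invoking the identity $(1-2^{-H})+(2^{1-H}-1)=2^{-H}$, both numerator and denominator reduce to scalar multiples of $2^{-nH}/[(2^{1-H}-1)(1-2^{-H})]$, with the numerator carrying an extra factor $2^{1-H}$. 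The remaining equalities in \eqref{modulo 3} are then immediate from this sharpness together with the uniform bound of part~(a).
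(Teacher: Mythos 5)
Your proposal is correct, and while it rests on the same two pillars as the paper's argument (a truncation at a dyadic level plus a self-similar rescaling of the tail, with the tail controlled via Corollary~\ref{osc cor}), the execution differs in two worthwhile ways. For part (a), the paper truncates at level $\nu(h)-1$ and is then forced into a three-case analysis on the parity of $\ell$ and on whether the two adjacent level-$(\nu-1)$ coefficients agree, followed by the somewhat delicate estimate \eqref{omega estimate eq} that compares $\frac{2^H+3}{6(2^H-1)}$ against the target; you instead truncate at level $\nu(h)$, accept that $[t,t+h]$ may straddle two level-$\nu$ intervals, and use the cruder but uniform bound $|\wt y^{(k)}|\le M$ (valid since $\mathfrak X^H=-\mathfrak X^H$). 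The resulting estimate $S_\nu h+2\cdot 2^{-\nu H}M$ is then bounded by (not identical to, since you drop the $-1$ in $S_\nu\le(2^{\nu(1-H)}-1)/(2^{1-H}-1)$) the quantity $\frac{h\,2^{\nu(1-H)}}{2^{1-H}-1}+\frac{2\cdot2^{-\nu H}}{3(1-2^{-H})}=2^{1-H}\omega_H(h)$, which makes the provenance of the factor $2^{1-H}$ transparent and removes the case analysis entirely. For part (b) the paper only defers to \cite[Theorem 3.1 (b)]{SchiedTakagi}, so your explicit construction with $h_n=\tfrac23 2^{-n}$ and $t_n=\tfrac12-\tfrac13 2^{-n}$ is genuinely self-contained; I have checked that the rescaled tails on the two sides of $\tfrac12$ are $\pm x^H$ evaluated at their maximizers $\tfrac23$ and $\tfrac13$, that the linear part equals $-\tfrac23\,2^{-H}x^H(2^{1-n})$ and has the same sign as the tail contribution $-2\cdot2^{-nH}M$, and that the ratio to $\omega_H(h_n)$ converges term-by-term to $2^{1-H}$, so the sharpness claim and hence \eqref{modulo 2} and \eqref{modulo 3} follow as you say.
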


\begin{remark}
Theorems~\ref{xH modulus thm} and~\ref{modulus thm} imply in particular that the functions in $\mathfrak X^H$ are H\"older continuous with exponent $H$ but not with any other exponent $\alpha>H$.  This fact already follows from the work of Ciesielski~\cite{Ciesielski}, who showed that the space of H\"older continuous functions is isomorphic to $\ell^\infty$ and the isomorphism consists in a suitable weighting of the coefficients in the Faber--Schauder expansion. We refer to~\cite{GalkinGalkina} for later results that also yield the H\"older continuity of the Takagi--Landsberg function.
The H\"older continuity implies that functions in  $\mathfrak X^H$ are of finite $\frac1H$-variation in the sense of~\cite[Definition 5.1]{FrizVictoir} and can thus be used as test integrators for Young integrals and rough-path calculus; see~\cite{FrizHairer,FrizVictoir} and the references therein. In this respect, the signed Takagi--Landsberg functions in $\mathfrak X^H$ differ from the typical sample paths of fractional Brownian motion with Hurst parameter $H$: It was shown in \cite{Pratelli} that these sample paths have almost surely infinite $\frac1H$-variation in the sense of~\cite[Definition 5.1]{FrizVictoir}.
\end{remark}

\section{Proofs}\label{Proofs}

\subsection{Proofs of Theorem~\ref{p-variation thm}  and Corollary~\ref{odd p cor}}

We will need the following  lemma.

\begin{lemma}\label{additive p-var lemma}Suppose that $p\ge1$ and $x,y\in C[0,1]$ are functions with continuous $p^{\text{th}}$ variation along $(\mathbb T_n)$. Then, if $\<y\>^{(p)}=0$,  the function $x+y$ admits the continuous $p^{\text{th}}$ variation $\<x+y\>^{(p)}=\<x\>^{(p)}$.
\end{lemma}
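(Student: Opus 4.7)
The plan is to read the partial sum $\sum_{s\in\mathbb T_n,\,s\le t}|z(s')-z(s)|^p$ as the $p^{\text{th}}$ power of an $\ell^p$-norm of the increment vector of $z$ along $\mathbb T_n\cap[0,t]$, and then to exploit the triangle inequality for this norm, which is available precisely because $p\ge 1$. Concretely, for a function $z\in C[0,1]$, set
$$N_n(z,t):=\Bigl(\sum_{s\in\mathbb T_n,\,s\le t}|z(s')-z(s)|^p\Bigr)^{1/p}.
$$
Since the map $z\mapsto N_n(z,t)$ is the $\ell^p$-norm of the finite sequence $(z(s')-z(s))_{s\in\mathbb T_n,\,s\le t}$, Minkowski's inequality for $p\ge 1$ yields $N_n(x+y,t)\le N_n(x,t)+N_n(y,t)$ and, applied to the decomposition $x=(x+y)+(-y)$, also $N_n(x,t)\le N_n(x+y,t)+N_n(y,t)$. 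Combining the two gives the reverse triangle estimate
$$\bigl|N_n(x+y,t)-N_n(x,t)\bigr|\le N_n(y,t).
$$

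I would then pass to the limit $n\uparrow\infty$ pointwise in $t$. By hypothesis, $N_n(x,t)^p\to\<x\>_t^{(p)}$ and $N_n(y,t)^p\to\<y\>_t^{(p)}=0$, so $N_n(y,t)\to 0$. The displayed estimate therefore gives $N_n(x+y,t)\to\<x\>_t^{(p)\,1/p}$, and by continuity of the map $u\mapsto u^p$ on $[0,\infty)$ we conclude
$$\lim_{n\uparrow\infty}\sum_{s\in\mathbb T_n,\,s\le t}|(x+y)(s')-(x+y)(s)|^p=\<x\>_t^{(p)}
$$
for every $t\in[0,1]$. Since the candidate limit function $t\mapsto\<x\>_t^{(p)}$ is continuous by the assumption that $x$ admits a continuous $p^{\text{th}}$ variation, the limit on the left defines a continuous function of $t$, which is exactly what is required of $\<x+y\>^{(p)}$. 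Hence $x+y$ admits continuous $p^{\text{th}}$ variation along $(\mathbb T_n)$ and $\<x+y\>^{(p)}=\<x\>^{(p)}$.

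There is essentially no obstacle here; the only thing to be a little careful about is that Minkowski's inequality requires $p\ge 1$, which is exactly the hypothesis of the lemma, and that the argument is purely pointwise in $t$ (the continuity of the limit function is inherited, not derived). No separate handling of $t=0$ or $t=1$ is needed because the bound above is uniform in $t$ in the sense that it holds for each partition and each $t$ simultaneously.
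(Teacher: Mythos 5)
Your proposal is correct and follows essentially the same route as the paper: both define the partial sums as $\ell^p$-norms of the increment vectors, apply Minkowski's inequality to obtain the two-sided bound $S_{t,n}(x)-S_{t,n}(y)\le S_{t,n}(x+y)\le S_{t,n}(x)+S_{t,n}(y)$, and pass to the limit. Your additional remarks on the pointwise nature of the limit and the inherited continuity of $t\mapsto\<x\>_t^{(p)}$ are accurate and merely make explicit what the paper leaves implicit.
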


\begin{proof}For   $z\in C[0,1]$, $t\in[0,1]$, and $n\in\mathbb N$, we define
$$S_{t,n}(z):=\bigg(\sum_{s\in\mathbb T_n,s\le t}|z(s')-z(s)|^p\bigg)^{1/p}.
$$
Minkowski's inequality yields that
\begin{align*}
S_{t,n}(x)-S_{t,n}(y)\le S_{t,n}(x+y)\le S_{t,n}(x)+S_{t,n}(y).
\end{align*}
Passing to the limit $n\uparrow\infty$ thus yields the assertion.
\end{proof}

\begin{proof}[Proof of Theorem~\ref{p-variation thm}] We start by proving the assertion for $p=1/H$ and  $t=1$. For a given function 
\begin{equation}\label{x FS expansion}
x=\sum\limits_{m=0}^{\infty}2^{m\left(\frac{1}{2}-H\right)}\sum\limits_{\ell=0}^{2^m-1}\theta_{m,\ell}e_{m,\ell}\in\mathfrak X^H
\end{equation}
and $n\in\mathbb N$, the corresponding truncated function $x_n$ is defined as
\begin{equation}\label{truncated  function}
x_n=\sum\limits_{m=0}^{n-1}2^{m\left(\frac{1}{2}-H\right)}\sum\limits_{\ell=0}^{2^m-1} \theta_{m,\ell}e_{m,\ell}.
\end{equation}
Then $x(k2^{-n})=x_n(k2^{-n})$ for $k=0,\dots, 2^n$. Moreover, the function $x_n$ is affine on $[k2^{-n},(k+1)2^{-n}]$ with slope
$$\sum_{m=0}^{n-1}2^{m(\frac12-H)}2^{m/2}\sigma_{m,k},
$$
where $\sigma_{m,k}\in\{-1,+1\}$. It follows that 
\begin{equation}\label{binary rep}
x((k+1)2^{-n})-x(k2^{-n})=2^{-n}\sum_{m=0}^{n-1}2^{m(1-H)}\sigma_{m,k}.
\end{equation}
Therefore,
\begin{align}\label{Sn sum}
V_n:=\sum_{k=0}^{2^n-1}\big|x((k+1)2^{-n})-x(k2^{-n})\big|^{p}&=\sum_{k=0}^{2^n-1}\bigg|2^{-n}\sum_{m=0}^{n-1}2^{m(1-H)}\sigma_{m,k}\bigg|^{p}.
\end{align}
Now we establish the convergence of $V_n$ as $n\uparrow\infty$. To this end, we claim that, when summing over $k$ in \eqref{Sn sum}, the vector $(\sigma_{0,k},\dots,\sigma_{n-1,k})^\top$ runs through the entire set $\{-1,+1\}^n$, and each of its elements appears exactly once. This claim will be proved below. Once it has been established,  we can represent $V_n$ by means of an expectation with respect to the uniform distribution on $\{-1,+1\}^n$ and, hence, with respect to 
the random variables $Y_0,\dots, Y_{n-1}$. That is,
\begin{align*}
V_n&=2^n\mathbb E\bigg[\bigg|2^{-n}\sum_{m=0}^{n-1}2^{m(1-H)}Y_{m}\bigg|^{p}\bigg]=\mathbb E\bigg[\bigg|\sum_{m=0}^{n-1}2^{(n-m)(H-1)}Y_{m}\bigg|^{p}\bigg]\\
&=\mathbb E\bigg[\bigg|2^{H-1}\sum_{m=0}^{n-1}2^{m(H-1)}Y_{n-1-m}\bigg|^{p}\bigg]=2^{1-1/H}\mathbb E\bigg[\bigg|\sum_{m=0}^{n-1}2^{m(H-1)}Y_{m}\bigg|^{p}\bigg],
\end{align*}
where we have used our assumption $p=1/H$ in the second step. Clearly, the infinite series
$$\sum_{m=0}^{\infty}2^{m(H-1)}Y_{m}=Z_H$$
converges absolutely, and so 
dominated convergence implies that 
$$\lim_{n\uparrow\infty} V_n= 2^{1-1/H}\mathbb E[|Z_H|^{p}].
$$
This proves the assertion \eqref{p-th variation of x} for $t=1$.

Now we prove our auxiliary claim that when summing over $k$ in \eqref{Sn sum}, the column vector $(\sigma_{0,k},\dots,\sigma_{n-1,k})^\top$ runs through the entire set $\{-1,+1\}^n$, and each of its elements appears exactly once. To this end, we consider first the particular case $x=x^H$. In this case, we write $\sigma^H_{m,k}$ for the signs in \eqref{binary rep}. The row vector $(\sigma^H_{m,0},\dots,\sigma^H_{m,2^n-1})$ consists of alternating blocks of length $2^{n-1-m}$ with entries all being $+1$ or $-1$. We thus obtain the matrix 
\begin{equation}\label{signs matrix}
\begin{pmatrix}\sigma^H_{n-1,0}&\cdots&\sigma^H_{n-1,2^n-1}\\
\sigma^H_{n-2,0}&\cdots&\sigma^H_{n-2,2^n-1}\\
\vdots&\vdots&\vdots\\
\sigma^H_{0,0}&\cdots&\sigma^H_{0,2^n-1}
\end{pmatrix}=\begin{pmatrix}+1&-1&+1&-1&\cdots&\cdots&\cdots&\cdots&+1&-1\\
+1&+1&-1&-1&\cdots&\cdots&\cdots&\cdots&-1&-1\\
\vdots&\vdots&\vdots&\vdots&\vdots&\vdots&\vdots&\vdots&\vdots&\vdots\\
+1&+1&+1&+1&\cdots&+1&-1&\cdots&-1&-1
\end{pmatrix}.
\end{equation}
When replacing all occurrences of $-1$ with $0$,  the columns of this matrix run through the binary representations of all integers from $2^n-1$ to 0. This establishes our auxiliary claim for $x=x^H$. 

Next, we consider the case in which there is exactly one coefficient $\theta_{m,\ell}$ in the representation \eqref{truncated  function} that is equal to $-1$. In this case, only the signs $\sigma_{m,2^{n-m}\ell},\dots,\sigma_{m,2^{n-m}(\ell+1)-1}$ are different from $(\sigma_{i,j}^H)$. More precisely,  
$\sigma_{m,k}=-\sigma^H_{m,k}$ for $k=2^{n-m}\ell,\dots, 2^{n-m}(\ell+1)-1$. That is, the matrix $(\sigma_{i,j})$ is obtained from $(\sigma^H_{i,j})$ by swapping the columns $ 2^{n-m}\ell$ to $(\ell+\frac12)2^{n-m}-1$ with the columns $(\ell+\frac12) 2^{n-m}$ to $(\ell+1)2^{n-m}-1$. In other words, when passing from $(\sigma^H_{i,j})$ to $(\sigma_{i,j})$,  only the order of columns changes and not the entire collection of all columns. Therefore, also for this $x$,  the column vector $(\sigma_{0,k},\dots,\sigma_{n-1,k})^\top$ in \eqref{Sn sum} runs through the entire set $\{-1,+1\}^n$. By iterating this argument and successively introducing further coefficients $\theta_{i,j}=-1$, we obtain our auxiliary claim for all $x\in\mathfrak X^H$. Note that our auxiliary claim implies in particular that $V_n$ in \eqref{Sn sum} does not depend on $x\in\mathfrak X^H$. 

In the next step, we establish the assertion for $p=1/H$  for arbitrary $t\in[0,1]$. To this end, we first recall the following scaling properties of the Faber--Schauder functions:
\begin{equation}\label{emk scaling prop}
\sqrt2 e_{m,k}(t)=e_{m-1,k}(2t)\qquad\text{and}\qquad e_{m,k}(t-\ell 2^{-m})=e_{m,k+\ell}(t)
\end{equation}
for $t\in\mathbb R$, $m\ge0$, and $k,\ell\in\mathbb Z$.  Let $x\in\mathfrak X^H$ have the development \eqref{x FS expansion}. The first scaling property in \eqref{emk scaling prop} implies that for $t\in[0,1/2]$, 
\begin{align*}
x(t)&=\theta_{0,0}e_{0,0}(t)+\sum_{m=1}^\infty 2^{m(\frac12-H)}\sum_{\ell=0}^{2^m-1}\theta_{m,\ell}e_{m,\ell}(t)\\
&=\frac12\theta_{0,0}2t+\sum_{m=1}^\infty 2^{m(\frac12-H)}\sum_{\ell=0}^{2^{m-1}-1}\theta_{m,\ell}e_{m,\ell}(t)\\
&=\frac12\theta_{0,0}2t+2^{-H}\sum_{m=0}^\infty2^{m(\frac12-H)}\sum_{\ell=0}^{2^{m}-1}\theta_{m+1,\ell}e_{m,\ell}(2t).
\end{align*}
That is, there exists $y\in\mathfrak X^H$ and a linear function $f$ such that $x(t)=f(2t)+2^{-H}y(2t)$ for $0\le t\le 1/2$. It is easy to see that our assumption $p>1$ implies that the linear function $f$ has $\<f\>^{(p)}_1=0$. It hence follows from Lemma~\ref{additive p-var lemma} and the assumption $p=1/H$ that 
$$\<x\>_{\frac12}^{(p)}=\<f+2^{-H}y\>_1^{(p)}=\<2^{-H}y\>_1^{(p)}=\frac12\<y\>_1^{(p)}=\frac12\mathbb E[|Z_H|^p].$$
 Iteratively, we obtain $\<x\>^{(p)}_{2^{-k}}=2^{-k}\mathbb E[|Z_H|^p]$ for all $k\in\mathbb N$.  Using also the second scaling property in \eqref{emk scaling prop} gives in a similar way that $\<x\>^{(p)}_{(k+1)2^{-\ell}}-\<x\>^{(p)}_{k2^{-\ell}}=2^{-\ell}\cdot\mathbb E[|Z_H|^p]$ for  $k,\ell\in\mathbb N$ with $(k+1)2^{-\ell}\le1$. We therefore arrive at $\<x\>^{(p)}_t=t \cdot \mathbb E[|Z_H|^p]$ for all dyadic rationals $t$ in $[0,1]$. A sandwich argument extends this fact to all $t\in[0,1]$.

 If $p>1/H$, then 
\begin{align*}
\sum_{s\in\mathbb T_n,s\le t}|x(s')-x(s)|^p&\le \max_{u\in\mathbb T_n}|x(u')-x(u)|^{p-1/H}\sum_{s\in\mathbb T_n,s\le t}|x(s')-x(s)|^{1/H}.
\end{align*}
The maximum tends to zero as $n\uparrow\infty$, due to the uniform continuity of $x$, whereas the sum on the right-hand side converges to $\<x\>^{(1/H)}_t=t\cdot \mathbb E[|Z_H|^{1/H}]<\infty$. Hence, the assertion is proved for  $p>1/H$.

If $t>0$ and $p<1/H$, we  take some $\alpha\in(0,p)$ and let $q:=p/\alpha$ and $r:=p/(p-\alpha)$. Then $\frac1q+\frac1r=1$ and so
\begin{align*}
\sum_{s\in\mathbb T_n,s\le t}|x(s')-x(s)|^{1/H}&=\sum_{s\in\mathbb T_n,s\le t}|x(s')-x(s)|^{\frac1H-\alpha}|x(s')-x(s)|^\alpha\\
&\le \bigg(\sum_{s\in\mathbb T_n,s\le t}|x(s')-x(s)|^{r(\frac1H-\alpha)}\bigg)^{1/r}\bigg(\sum_{s\in\mathbb T_n,s\le t}|x(s')-x(s)|^{p}\bigg)^{1/q}.
\end{align*}
One easily checks that $r(\frac1H-\alpha)>\frac1H$, and so the first sum on the right-hand side tends to zero by the preceding step of the proof. Moreover, the sum on the left converges to the strictly positive value $t\cdot\mathbb E[|Z_H|^{1/H}]$. Therefore, the second sum on the right-hand side must converge to infinity. 
\end{proof}

\begin{proof}[Proof of Corollary~\ref{odd p cor}] As in \eqref{Sn sum}, we get that 
$$
\widetilde V_n:=\sum_{k=0}^{2^n-1}\big(x((k+1)2^{-n})-x(k2^{-n})\big)^{p}=\sum_{k=0}^{2^n-1}\bigg(2^{-n}\sum_{m=0}^{n-1}2^{m(1-H)}\sigma_{m,k}\bigg)^{p}.
$$
Moreover, we have seen in the proof of Theorem~\ref{p-variation thm}  that, when summing over $k$, the vector $(\sigma_{0,k},\dots,\sigma_{n-1,k})^\top$ runs through the entire set $\{-1,+1\}^n$, and each of its elements appears exactly once. Therefore, if $Y_0,Y_1,\dots$ are random variables as in Theorem~\ref{p-variation thm}, then dominated convergence yields that 
$$\widetilde V_n=\red{2^{1-1/H}}\mathbb E\bigg[\bigg(\sum_{m=0}^{n-1}2^{m(H-1)}Y_{m}\bigg)^{p}\bigg]\longrightarrow \red{2^{1-1/H}} \mathbb E\bigg[\bigg(\sum_{m=0}^{\infty}2^{m(H-1)}Y_{m}\bigg)^{p}\bigg]
$$
as $n\uparrow\infty$. Clearly, the rightmost expectation vanishes due to  symmetry reasons. This establishes the assertion for $t=1$. For general $t\in[0,1]$, we can  repeat the scaling arguments used in the proof of Theorem~\ref{p-variation thm}.
\end{proof}

\subsection{Proofs of Theorem~\ref{max thm} and Corollary~\ref{osc cor}}

The following lemma extends Lemma 5 in Galkina~\cite{Galkina}, where the case $H=1/2$ was treated. Lemma 5 in~\cite{Galkina} was later rediscovered by the second author in~\cite[Lemma 3.1]{SchiedTakagi}.

	\begin{lemma}\label{lem1.1}
	Consider the sequence of functions
	$$x^{H}_n(t)=\sum\limits_{m=0}^{n-1}2^{m(\frac{1}{2}-H)}\sum\limits_{k=0}^{2^m-1}e_{m,k}(t),\qquad t\in[0,1], n\geq 1.$$
Define the sequence $$M_n^H=\frac{1}{3(1-2^{-H})}+\frac{(-1)^{n-1}}{3(2^{1-H}+1)2^n}-\frac{2^{-nH}}
{(1+2^{1-H})(2^H-1)},$$
and let $$J_n=\frac{1}{3}(2^n-(-1)^n)$$ be the sequence of Jacobsthal numbers. Then the function $x_n^H$ has exactly two maximal points given by $t_n^-=2^{-n}J_n\in[0,\frac12]$ and $t_n^+=1-t_n^-\in[\frac12,1]$, and the global maximum of $x_n^H$ is given by $$\max_{t\in[0,1]}x_n^H(t)=x_n^H(t_n^-)=x_n^H(t_n^+)=M_n^H.$$
	\end{lemma}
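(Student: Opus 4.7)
The plan is to induct on $n$, exploiting the self-similar recursion that comes from the Faber--Schauder scaling $e_{m,k}(t) = 2^{-1/2} e_{m-1,k}(2t)$. For the base case $n = 1$, $x_1^H = e_{0,0}$ has its unique maximum $\tfrac12$ at $t = \tfrac12 = J_1/2$; a short algebraic simplification confirms that the closed form for $M_1^H$ indeed equals $\tfrac12$.

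For the inductive step, assume the statement at level $n-1$. First I would derive the self-similarity identity $x_n^H(t) = t + 2^{-H} x_{n-1}^H(2t)$ for $t \in [0, \tfrac12]$, together with the symmetry $x_n^H(1-t) = x_n^H(t)$. Because $x_n^H$ is affine on each dyadic interval $[k/2^n, (k+1)/2^n]$, its maximum on $[0, \tfrac12]$ is attained at a node $k/2^n$; writing $V_k^{(n)} := x_n^H(k/2^n)$, the identity gives
\begin{equation*}
V_k^{(n)} = \frac{k}{2^n} + 2^{-H} V_k^{(n-1)} \qquad \text{for } 0 \le k \le 2^{n-1}.
\end{equation*}
By the inductive hypothesis, $V_k^{(n-1)} \le M_{n-1}^H$ with equality iff $k \in \{J_{n-1},\, 2^{n-1} - J_{n-1}\}$, and the Jacobsthal identity $J_n + J_{n-1} = 2^{n-1}$ (immediate from the closed form $J_n = (2^n - (-1)^n)/3$) reduces the second candidate to $J_n$. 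Of the two, $k = J_n$ gives the larger node value, and direct algebraic manipulation using $J_n = 2 J_{n-1} + (-1)^{n-1}$ and the closed forms of $M_{n-1}^H$ and $M_n^H$ confirms $V_{J_n}^{(n)} = J_n/2^n + 2^{-H} M_{n-1}^H = M_n^H$.

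For uniqueness of the maximizers I split the range $k \in \{0, \dots, 2^{n-1}\} \setminus \{J_n\}$ into two cases. When $k < J_n$, the elementary bound $V_k^{(n)} \le k/2^n + 2^{-H} M_{n-1}^H < M_n^H$ suffices. When $J_n < k \le 2^{n-1}$, I use the symmetry of $x_{n-1}^H$ to write $V_k^{(n-1)} = V_j^{(n-1)}$ with $j := 2^{n-1} - k \in \{0, \dots, J_{n-1} - 1\}$; since $j < J_{n-1} < 2^{n-2}$ the recursion applies once more, yielding $V_j^{(n-1)} \le j/2^{n-1} + 2^{-H} M_{n-2}^H$. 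Substituting and collapsing using $J_{n-1} + J_n = 2^{n-1}$ produces the clean gap
\begin{equation*}
M_n^H - V_k^{(n)} \ge \frac{(k - J_n)(2^{1-H} - 1)}{2^n} > 0,
\end{equation*}
since $2^{1-H} > 1$ for $H \in (0, 1)$.

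The main obstacle I anticipate is precisely this last case $k > J_n$: the linear bias $k/2^n$ grows with $k$, while the one-step bound $V_k^{(n-1)} \le M_{n-1}^H$ throws away the quantitative decrease of $x_{n-1}^H$ away from its maximizers, so the gap cannot be read off at a single level. The fix is the \emph{double} iteration of the recursion combined with the Jacobsthal identity, which produces exactly the cancellation needed for the displayed lower bound above, closing the induction.
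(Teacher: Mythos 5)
Your proof is correct, but it runs along a genuinely different track from the paper's. The paper also inducts on $n$, but it works locally: it shows that the maximum of $x_{n+1}^H$ must sit at the peak $s^*$ of some $e_{n,\ell}$, uses the midpoint identity $x_{n+1}^H(s^*)=\tfrac12\bigl(x_n^H(s)+x_n^H(s')\bigr)+2^{-nH-1}$ together with the fact that $x_n^H$ agrees with $x_{n-1}^H$ at whichever endpoint lies in $\mathbb T_{n-1}$, and gets $x_{n+1}^H(s^*)\le\tfrac12(M_n^H+M_{n-1}^H)+2^{-nH-1}=M_{n+1}^H$, with equality forcing $\{s,s'\}=\{t_n,t_{n-1}\}$. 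You instead exploit the global self-similarity $x_n^H(t)=t+2^{-H}x_{n-1}^H(2t)$ on $[0,\tfrac12]$, reduce to node values, and handle the troublesome range $k>J_n$ by reflecting through the symmetry of $x_{n-1}^H$ and iterating the recursion a second time; this buys you an explicit quantitative gap $M_n^H-V_k^{(n)}\ge (k-J_n)(2^{1-H}-1)2^{-n}$ that the paper's argument does not produce, and it makes the uniqueness of the maximizer essentially automatic rather than a separate equality-case discussion. Two small things to patch (both also implicitly present in the paper, which uses $n=1$ and $n=2$ as base cases): your reflection step invokes the hypothesis at level $n-2$, so the induction is really a two-step (strong) induction and needs either the base case $n=2$ checked directly or the convention $x_0^H\equiv 0$, $M_0^H=0$ (which does hold); and to conclude that $t_n^-$ and $t_n^+$ are the \emph{only} maximizers you should add the one-line remark that an interior maximizer of an affine piece would force both adjacent nodes to be maximizers, which your node analysis excludes.
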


%\blue{The preceding lemma can be proved by induction on $n$ along the same  lines as \cite[Lemma 3.1]{SchiedTakagi}. For this reason, its proof is omitted here. It is worth noting, however, where our assumption $0<H<1$ is needed. It is required already at the base case of the induction, which deals with the cases $n=1$ and $n=2$. For $n=1$ we have that  $x_1^H(t)=e_{0,0}(t)=\min\{t,1-t\},$
%and this function achieves its unique maximum at $t_1=\frac12=2^{-1}J_1$, the maximal value being equal $x_1^H(t_1)=\frac12=M_1.$
%For $n=2$, we have
% $x_2^H(t)=e_{0,0}(t)+2^{\frac12-H}(e_{1,0}(t)+e_{1,1}(t))$.
%Since $0<H<1$, this function achieves its maximum at $t_2^-=\frac14=2^{-2}J_2$ and $t_2^+=\frac34=1-t_2^-$.
%The maximum equals $x_2^H(t_2)=\frac14+2^{-1-H}=M_2$. For $H=1$, the function  $x_2^H$ is constant on $[\frac14,\frac34]$ and hence every point in this interval is a maximizer (this is the case studied by Kahane \cite{Kahane}). For $H>1$, the function has a unique maximizer at $\frac12$.}
%
	
\begin{proof}
We follow the steps in the proof of~\cite[Lemma 3.1]{SchiedTakagi}, modifying it for our purposes. Note that $x_n^H$ is symmetric with respect to $t=\frac12$.  Therefore, it is sufficient to consider the restriction of $x_n^H$ to $[0,\frac12]$. In what follows, we let  $t_n:=t_n^-.$ Now we proceed by induction on $n$. For $n=1$ we have that  $$x_1^H(t)=e_{0,0}(t)=\min\{t,1-t\},$$
and this function achieves its maximum at $t_1=\frac12=2^{-1}J_1$, the maximal value being equal $x_1^H(t_1)=\frac12=M_1.$
Let $n=2$. Then for $t\in[0,\frac12]$  $$x_2^H(t)=e_{0,0}(t)+2^{\frac12-H}(e_{1,0}(t)+e_{1,1}(t))=e_{0,0}(t)+2^{\frac12-H}e_{1,0}(t).$$
Since $0<H<1$, this function achieves its maximum at $t_2=\frac14=2^{-2}J_2$ and it equals $$x_2^H(t_2)=\frac14+2^{-1-H}=M_2.$$
The maximizers of $x_1^H$ and $x_2^H$ on $[0,\frac{1}{2}]$ are obviously unique. Now, let $n\geq 2$. Note that
$$x_n^H(t)=x_{n-1}^H(t)+2^{(n-1)(\frac12-H)}\sum_{k=0}^{2^{n-1}-1}e_{n-1,k}(t),$$
and
$$x_{n+1}^H(t)=x_{n-1}^H(t)+2^{(n-1)(\frac12-H)}\sum_{k=0}^{2^{n-1}-1}f_{n-1,k}(t),$$
where $$f_{m,k}(t)=e_{m,k}(t)+2^{ \frac12-H }e_{m+1,2k}(t)+2^{ \frac12-H }e_{m+1,2k+1}(t).$$
According to the induction hypothesis, the maximal value of $x_n^H$ is attained at the peak of some function $e_{n-1,k}.$ The support of $f_{n-1,k}$ coincides with the support of $e_{n-1,k}$, and $x_{n-1}^H$ is linear on this support. The function
$f_{n-1,k}$ has two maxima at $t_n-2^{-n-1}$ and $t_n+2^{-n-1}$ and they are strictly larger than the maximum of $e_{n-1,k}$. Therefore, either $x_{n+1}^H(t_n-2^{-n-1})$ or $x_{n+1}^H(t_n+2^{-n-1})$ is strictly larger than $x_n^H(t_n)=\max_{t\in[0,\frac12]}x_n^H(t).$ It means that the maximum of $x_{n+1}^H$ is attained at the peak of some Faber--Schauder function $e_{n,\ell}$ for some index $\ell$.  Let the support of $e_{n,\ell}$ be an interval with endpoints $s$ and $s'$. Then its peak is at point $s^*=\frac{s+s'}{2}$ and has height $2^{-\frac{n}{2}-1}.$ The function $x_n^H$ is linear on the support of $e_{n,\ell}$, therefore,
\begin{align*}\max_{t\in[0,\frac12]}x_{n+1}^H(t)&=x_{n+1}^H(s^*)=x_{n}^H(s^*)+2^{n(\frac12-H)}e_{n,\ell}(s^*)\\
&=
\frac{x_{n}^H(s)+x_{n}^H(s')}{2}+2^{n(\frac12-H)}2^{-\frac{n}{2}-1}=\frac{x_{n}^H(s)+x_{n}^H(s')}{2}+2^{-nH-1}.
\end{align*}
At one of the points $s$ or $s'$, let it be $s'$,  the function $x_{n}^H$ coincides with $x_{n-1}^H$. Therefore, $x_{n}^H(s')\leq M_{n-1}^H.$ Obviously, $x_{n}^H(s)\leq M_{n}^H.$ Therefore,
\begin{equation}\label{bound-above}
 x_{n+1}^H(s^*)\leq \frac{M_n^H+M_{n-1}^H}{2}+2^{-nH-1}=M_{n+1}^H.
\end{equation}
We can take $s^*$ as the midpoint between $t_n$ and $t_{n-1}$, because  $t_n$ and $t_{n-1}$ enclose the interval of the support of some Faber--Schauder function of $n$th generation. In this case, we have equality in \eqref{bound-above}, and, moreover, $s^*=\frac{t_n+t_{n-1}}{2}=t_{n+1}.$
We immediately conclude that $t_{n+1}$ is the unique maximizer of $x_{n+1}^H$ in $[0,\frac12]$ because by induction hypothesis, $\frac{t_n+t_{n-1}}{2}$  is the only point at which we have equality in \eqref{bound-above}.
\end{proof}

	\begin{proof}[Proof of Theorem~\ref{max thm}]  By sending $n$ to infinity  in Lemma~\ref{lem1.1},  one easily gets our formula for the maximum of $x^H$ and the fact that $x^H$ is maximized at $\lim_n t_n^-=\frac13$ and $\lim_nt_n^+=\frac23$. It thus remains to show that these are the only maximum points. Since $x^H(t)$ is symmetric around $t=1/2$ by \eqref{emk scaling prop}, we can concentrate on the restriction of $x^H$ to the interval $[0,\frac12]$.  Let us thus assume by way of contradiction that there exists $t_0\in[0,\frac12]\setminus\{\frac13\}$ such that $x^H(t_0)=x^H(\frac13)$. Let $t_n:=t_n^-=2^{-n}J_n$ be as in Lemma~\ref{lem1.1}. Then, for all $n$, the limit $\frac13$ is contained in the dyadic interval $I_n$ with endpoints $t_n$ and $t_{n-1}$. 
	Let similarly $S_n$ be that dyadic interval of the form $[k2^{-n},(k+1)2^{-n}]$ such that $t_0\in S_n$. Then there exists  $m\in\mathbb N$ such that $I_n$ and $S_n$ are disjoint for all $n\ge m$. Note that $x^H_m$ is linear on each of the two intervals $I_m$ and  $S_m$. 
	
	Next, we let $s_{0}$ and $s_1$ be the endpoints of $S_m$,  ordered such that $x_m^H(s_0)\le x_m^H(s_1)$. Since $s_0$ and $s_1$ are neighboring points in the $m^{\text{th}}$ dyadic partition $\mathbb T_m$, there must be $i\in\{0,1\}$ such that $s_i\in\mathbb T_{m-1}$. Since $t_{m-1}$ and $t_m$ are the unique respective maximizers of $x_{m-1}^H$ and $x_m^H$  in $[0,\frac12]$ 
	and $t_{m-1},t_m\notin S_m$, we have
		\begin{equation}\label{si eq}	x_m^H(s_i)=x_{m-1}^H(s_i)<x^H_{m-1}(t_{m-1})=x^H_{m}(t_{m-1})\qquad\text{and}\qquad x_m^H(s_{1-i})<x_m^H(t_m).
	\end{equation}
	Now let $f:[0,2^{-m}]\to \mathbb R_+$ be the function that increases linearly from $x_m^H(t_{m-1})$ to $x_m^H(t_{m})$, and  we define 
	$g:[0,2^{-m}]\to \mathbb R_+$ as the function that increases linearly from $x_m^H(s_0)$ to $ x_m^H(s_1)$. 	By considering the two possibilities $i\in\{0,1\}$, we get from \eqref{si eq} that 	$g(t)<f(t)$ for all $t\in[0,2^{-m}]$. 
	Moreover, our assumption that both $t_0$ and $\frac13$ are maximizers of $x^H$, together with the scaling properties \eqref{emk scaling prop} and the linearity of $x^H_m$ on both $S_m$ and $I_m$, implies that 	for $\widetilde t_0:=t_0-\min\{s_0,s_1\}\in[0,2^{-m}]$,
			\begin{align*}
\max_{r\in[0,1]}x^H(r)&=	x^H(t_0)\\
&=g(\widetilde t_0)+\sum_{n=m}^\infty2^{n(1/2-H)}\sum_{k=0}^{2^n-1}e_{n,k}(\widetilde t_0)\\
&<f(\widetilde t_0)+\sum_{n=m}^\infty2^{n(1/2-H)}\sum_{k=0}^{2^n-1}e_{n,k}(\widetilde t_0)\\
&\le \max_{t\in I_n}x^H(t)=x^H(1/3)=\max_{r\in[0,1]}x^H(r).
	\end{align*}
This is the desired contradiction.	\end{proof}

\begin{proof}[Proof of Corollary~\ref{osc cor}] 
The inequality
\begin{equation}\label{ineq1.2}\max_{s,t\in[0,1]}|x(s)-x(t)|\leq \max_{s\in[0,1/2]}\max_{t\in[1/2,1]}|\wt x^H (s)-\wt x^H (t)|,\end{equation}
for any $x\in \mathfrak{ X}^H$  can be proved in the same way as (3.9) in~\cite{SchiedTakagi}.  Taking $x=\wt x^H$ thus yields that the right-hand side of \eqref{ineq1.2} is equal to $\max_{s,t\in[0,1]}|\wt x^H (s)-\wt x^H (t)|$. Therefore, 
$$\max_{x\in\mathfrak X^H}\max_{s,t\in[0,1]}|x(s)-x(t)|=\max_{s,t\in[0,1]}|\wt x^H (s)-\wt x^H (t)|.
$$
Furthermore, $x^H=\wt x^H $ on $[0,\frac12]$ so the maximal value of  $\wt x^H $ is attained at $t_1=\frac13$ and equals $\frac{1}{3(1-2^{-H})}.$ On the interval $[\frac12, 1]$ we have that $\wt x^H (t)=\frac12-x^{H}(t-\frac12), $ whence the minimal value of $\wt x^H $ is achieved
at $t_2=\frac56$ and equals $\frac12-\frac{1}{3(1-2^{-H})}$. From here,  the result follows.
\end{proof}

\subsection{Proofs of Theorem~\ref{xH modulus thm} and~\ref{modulus thm}}

\begin{proof}[Proof of Theorem~\ref{xH modulus thm}] The proof extends arguments from the proof of~\cite[Theorem 3.1 (a)]{SchiedTakagi}. Let $h>0$ be given and $n=\nu(h)$. Then $2^{-n-1}<h\leq 2^{-n}.$ Since the Faber--Schauder functions $e_{m,k}$ are linear with the slope $\pm2^{\frac m2}$ on the dyadic intervals of length $2^{-m-1}$, we get for $m\leq n-2$ and fixed $t\in[0,1-2^{-n})$  that
\begin{equation}\begin{split}\label{eqref 3.09} |x^H(t+h)-x^H(t)|&\leq \sum_{m=0}^{n-2} 2^{m(\frac12-H)}h2^{\frac m2}
+\left|\sum_{m=n-1}^\infty 2^{m(\frac12-H)}\sum_{k=0}^{2^m-1}(e_{m,k}(t+h)-e_{m,k}(t))\right|.\end{split}\end{equation}
The first sum on the right-hand side equals
\begin{equation}\label{eqref 3.09b}
\frac{2^{(n-1)(1-H)}-1}{2^{1-H}-1}h.
\end{equation}
To analyze the second sum, let the integer  $\ell$ be such that $\ell2^{-n}\le t<(\ell+1)2^{-n}$.  Then  $s:=t-\ell 2^{-n}$ is such that  $ 0\le  2^{n-1}s<1/2$ and $ 1/4<2^{n-1}(s+h)<1$. Hence, the scaling properties \eqref{emk scaling prop}
 imply that for $m\ge n$,
 \begin{align}\label{Faber-Schauder rescaling}
 e_{m,k}(t+h)-e_{m,k}(t)=2^{\frac{1-n}2}\big(e_{m-(n-1), k-\ell2^{m-n}}( 2^{n-1}(s+h))-e_{m-(n-1), k-\ell2^{m-n}}( 2^{n-1}s)\big).
 \end{align}
If $\ell$ is even, the preceding identity also holds for $m=n-1$, and we arrive at 
\begin{equation}\label{ell even case}
\begin{split}
\bigg|\sum_{m=n-1}^\infty 2^{m(\frac12-H)}\sum_{k=0}^{2^m-1}(e_{m,k}(t+h)-e_{m,k}(t))\bigg|&=2^{(1-n)H}\big|x^H(2^{n-1}s+h))-x^H(2^{n-1}s)\big|\\
&\le 2^{(1-n)H}\frac{1}{3(1-2^{-H})},
\end{split}
\end{equation}
where we have used Theorem~\ref{max thm} in the second step.  It now follows from \eqref{eqref 3.09} and \eqref{eqref 3.09b} that 
 \begin{equation}\label{xH estimate mod}\begin{split}|x^H(t+h)-x^H(t)|&\leq \frac{2^{(n-1)(1-H)}-1}{2^{1-H}-1}h+2^{(1-n)H }\frac{1}{3(1-2^{-H})}\leq \omega_H(h). \end{split}\end{equation}
If $\ell$ is odd, then the fact that
$$e_{n-1,(\ell-1)/2}(r)+e_{n-1,(\ell+1)/2}(r)=2^{\frac{1-n}2}\Big(\frac12-e_{0,0}(2^{{n-1}}r-\ell/2)\Big),\qquad\text{$r\in[\ell2^{-n},(\ell+2)2^{-n}]$,}
$$
yields that 
\begin{align*}
\bigg|\sum_{m=n-1}^\infty 2^{m(\frac12-H)}\sum_{k=0}^{2^m-1}(e_{m,k}(t+h)-e_{m,k}(t))\bigg|&=2^{(1-n)H}\big|
y(2^{n-1}(s+h)-y(2^{n-1}s)\big|,
\end{align*}
where, for $\varphi(u)=|1/2-u|$, the function $y$ is given by
\begin{equation}\label{y fct}
y =-e_{0,0}+\sum_{m=1}^\infty 2^{m(\frac12-H)}\sum_{k=0}^{2^m-1}e_{m,k}=x^H\circ \varphi-\frac12.
\end{equation}
 Thus, \eqref{xH estimate mod} holds also in case $\ell$ is odd. This concludes the proof of part (a).
%\begin{figure}
%\begin{center}
%\begin{overpic}[width=9cm]{}
%{\small
%\put(102,4){$t$}
%\put(97,0){$1$}
%\put(49,0){$\frac12$}
%\put(-11,59){$0.8222$}
%}
%\end{overpic}
%\caption{The function $y(t)$ from \eqref{y fct} (black) and $x^H(t)$ (gray) for $H=\frac34$.}\label{y figure}
%\end{center}
%\end{figure}

To establish part (b), put $t=0$ and $h_n=\frac 23 2^{-n}.$  Then 
 \begin{equation*}\begin{split}|x^H(t+h_n)-x^H(t)|&=x^H( h_n)=\frac{2^{(n-1)(1-H)}-1}{2^{1-H}-1}h_n+2^{(1-n)H }\frac{1}{3(1-2^{-H})}\\&= \omega_H(h_n)-\frac{h_n}{2^{1-H}-1}. \end{split}\end{equation*}
It hence follows  from \eqref{omega=Theta(h)}  that $$\limsup_{n\uparrow\infty}\frac{|x^H(h_n)-x^H(0)|}{\omega_H(h_n)}\geq 1,$$ and the proof of part (b) is complete.\end{proof}

\begin{proof}[Proof of Theorem~\ref{modulus thm}] The proof extends arguments from the proof of~\cite[Theorem 3.1 (b)]{SchiedTakagi}.  To prove part (a), let $h>0$ be given, and $n=\nu(h)$. Arguing as in \eqref{eqref 3.09} and \eqref{eqref 3.09b},  we get for $m\leq n-2$ and for  $t\in[0,1-2^{-n})$  that
\begin{equation}\begin{gathered}\label{eqref 3.11} |x(t+h)-x(t)|\leq \frac{2^{(n-1)(1-H)}-1}{2^{1-H}-1}h
+\left|\sum_{m=n-1}^\infty 2^{m(\frac12-H)}\sum_{k=0}^{2^m-1}\theta_{m,k}(e_{m,k}(t+h)-e_{m,k}(t))\right|.\end{gathered}\end{equation}
To analyze the sum on the right-hand side,  let the integer  $\ell$ be such that $\ell2^{-n}\le t<(\ell+1)2^{-n}$.  Then  $s:=t-\ell 2^{-n}$ is such that  $ 0\le  2^{n-1}s<1/2$ and $ 1/4<2^{n-1}(s+h)<1$. We distinguish three cases. 
 \begin{enumerate}[{\rm (i)}]
 \item  If $\ell $ is even, then \eqref{Faber-Schauder rescaling} yields that
 \begin{equation}\label{even L} \sum_{m=n-1}^\infty 2^{m(\frac12-H)}\sum_{k=0}^{2^m-1}\theta_{m,k}(e_{m,k}(t+h)-e_{m,k}(t))=2^{(1-n)H }(y(2^{n-1}(s+h)-y(2^{n-1}s)), \end{equation}
 where    $$y=\sum_{m=0}^\infty 2^{m(\frac12-H)}\sum_{k=0}^{2^m-1}\theta_{m+n-1,k+\ell 2^{m-1}}e_{m,k}\in \mathfrak{ X}^H.$$
\item  If $\ell $ is odd and, additionally, $\theta_{n-1, \frac{\ell -1}{2}}=\theta_{n-1, \frac{\ell +1}{2}}$, then \eqref{even L} holds with $$y=-\theta_{n-1, \frac{\ell -1}{2}}e_{0,0}+\sum_{m=1}^\infty 2^{m(\frac12-H)}\sum_{k=0}^{2^m-1}\theta_{m+n-1,k+\ell 2^{m-1}} e_{m,k}\in \mathfrak{ X}^H.$$
 \item  If $\ell $ is odd  and  $\theta_{n-1, \frac{\ell -1}{2}}=-\theta_{n-1, \frac{\ell +1}{2}}$, then, similarly to \eqref{eqref 3.11} and~\cite[Eq.~(3.15)]{SchiedTakagi},
     \begin{equation}\bigg|\sum_{m=n}^\infty 2^{m(\frac12-H)}\sum_{k=0}^{2^m-1}\theta_{m,k}(e_{m,k}(t+h)-e_{m,k}(t))\bigg|=2^{-nH}\big|y(2^n(s+h)))-y(2^ns)\big|,
\end{equation}
where the function $y$ can take nonzero values on $[0,2]$ and can be decomposed as $y(r)=y_1(r)\Ind{[0,1]}(r)+y_2(r-1)\Ind{[1,2]}(r)$, for certain functions $y_1,y_2\in \mathfrak{ X}^H $. The exact expressions of the functions $y_i$ are not important  for our further considerations.
\end{enumerate}

Now we analyze the right-had side of the inequality \eqref{eqref 3.11} according to the cases (i)--(iii). In the cases (i) and (ii), we get as in \eqref{ell even case} and by using Corollary \ref{osc cor} that
\begin{align}
 |x(t+h)-x(t)|&\leq \frac{2^{(n-1)(1-H)}-1}{2^{1-H}-1}h
+2^{(1-n)H}\sup_{y\in\mathfrak X^H}\big|y(2^{n-1}(s+h)-y(2^{n-1}s)\big|\nonumber\\
&\le  \frac{2^{(n-1)(1-H)}}{2^{1-H}-1}h
+2^{(1-n)H}\frac{2^H+3}{6(2^H-1)}\nonumber\\
&=2^{1-H} \frac{2^{(n-1)(1-H)}}{2^{1-H}-1}h+(1-2^{1-H})\frac{2^{(n-1)(1-H)+\log_2h}}{2^{1-H}-1}+2^{(1-n)H}\frac{2^H+3}{6(2^H-1)}\nonumber\\
&\le 2^{1-H}  \frac{2^{(n-1)(1-H)}}{2^{1-H}-1}h+2^{(1-n)H}\bigg(\frac{2^H+3}{6(2^H-1)}-\frac14\bigg)\label{omega estimate eq}.
\end{align}
After some manipulations, we find that $$2^{(1-n)H}\bigg(\frac{2^H+3}{6(2^H-1)}-\frac14\bigg)=\frac34\Big(\frac32-\frac{2^H}{6}\Big)\cdot 2^{1-H}\frac{2^{(1-n)H}}{3(1-2^{-H})}.
$$
Since $\frac34(\frac32-\frac{2^H}{6})<1$ for $H>0$, we conclude that \eqref{omega estimate eq} is bounded from above by $2^{1-H}\omega_H(h)$, as desired. 

In case (iii), we get 
\begin{align*}
 |x(t+h)-x(t)|&\leq \frac{2^{(n-1)(1-H)}-1}{2^{1-H}-1}h
+2^{-nH}\sup_{r,s\in[0,2]}|y(r)-y(s)|,
\end{align*}
where $y(r)=y_1(r)\Ind{[0,1]}(r)+y_2(r-1)\Ind{[1,2]}(r)$, for certain functions $y_1,y_2\in \mathfrak{ X}^H $. The supremum on the  right-hand side  realized when $y_1=-y_2=x^H$ and,  according to Theorem~\ref{max thm}, given by $2/(3(1-2^{-H}))$. Therefore, 
$$ |x(t+h)-x(t)|\leq \frac{2^{(n-1)(1-H)}-1}{2^{1-H}-1}h
+\frac{2^{1-nH}}{3(1-2^{-H})}\le 2^{1-H}\omega_H(h).
$$
This completes the proof of part (a).
The proof of part (b) can be completed as the one of~\cite[Theorem 3.1 (b)]{SchiedTakagi}.
\end{proof}

\noindent{\bf Acknowledgement:} The authors are grateful to Zhenyuan Zhang for pointing out that, in the published version of this paper,  the factor $2^{1-1/H}$ is missing in \eqref{p-th variation of x} and in the caption of Figure \ref{Figure3}.
	 \parskip-0.5em\renewcommand{\baselinestretch}{0.9}\normalsize
\bibliography{CTbook}{}
\bibliographystyle{abbrv}
\end{document}